\documentclass[12pt, a4paper]{amsart}
\usepackage{amscd,amssymb,mathtools,bbold,graphics}
\usepackage{amsmath}

\usepackage{float}

\setlength{\oddsidemargin}{0cm}
\setlength{\evensidemargin}{0cm}
\setlength{\topmargin}{0cm}
\setlength{\textwidth}{16cm}
\setlength{\textheight}{23cm}

\usepackage{enumitem}
\usepackage{tensor}
\usepackage{wrapfig}
\usepackage{tikz}

\allowdisplaybreaks

\usetikzlibrary{cd}
\usetikzlibrary{ext.paths.ortho,shapes,fit,positioning}
\usetikzlibrary{calc}

\tikzset{tube/.style={draw, ellipse, minimum width=1.5cm}}

\usepackage{color, url, hyperref}

\hypersetup{colorlinks=true, linkcolor=blue, linkbordercolor=blue, citecolor=blue, citebordercolor=blue, linktocpage=true}

\def\Aut{\operatorname{Aut}} 
\def\End{\operatorname{End}}

\def\dim{\operatorname{dim}}

\def\id{\operatorname{id}}

\def\NC{\operatorname{NC}}

\def\Hom{\operatorname{Hom}}
\def\spn{\operatorname{span}}
\def\Rep{\operatorname{Rep}}
\def\dom{\operatorname{dom}}
\def\cod{\operatorname{cod}}

\def\A{\mathbb{A}}

\def\C{\mathbb{C}}

\def\N{\mathbb{N}}

\def\BB{\mathcal{B}}
\def\CC{\mathcal{C}}

\def\HH{\mathcal{H}}

\def\KK{\mathcal{K}}

\def\RR{\mathcal{R}}

\def\TT{\mathcal{T}}

\newcommand{\idpart}{\hspace{0.5ex}\raisebox{-0.25ex}{\begin{tikzpicture}[bullet/.style={circle, fill, minimum size=2pt, inner sep=0pt, outer sep=0pt},nodes=bullet]
\node (t) at (0,0) {};
\node (b) at (0,-1.5ex) {};
\draw (t) -- (b);
\end{tikzpicture}
}}

\newcommand{\pants}{\hspace{0.5ex}\raisebox{-0.25ex}{\begin{tikzpicture}[bullet/.style={circle, fill, minimum size=2pt, inner sep=0pt, outer sep=0pt},nodes=bullet]
\node (t) at (0,0) {};
\node (b1) at (-0.5ex,-1.5ex) {};
\node (b2) at (0.5ex,-1.5ex) {};
\draw (t) -- (0,-0.75ex) -- (-0.5ex,-0.75ex) -- (b1);
\draw (0,-0.75ex) -- (0.5ex,-0.75ex) -- (b2);
\end{tikzpicture}
}}

\newcommand{\shirt}{\hspace{0.5ex}\raisebox{-0.25ex}{\begin{tikzpicture}[bullet/.style={circle, fill, minimum size=2pt, inner sep=0pt, outer sep=0pt},nodes=bullet]
\node (t1) at (-0.5ex,0) {};
\node (t2) at (0.5ex,0) {};
\node (b) at (0,-1.5ex) {};
\draw (b) -- (0,-0.75ex) -- (-0.5ex,-0.75ex) -- (t1);
\draw (0,-0.75ex) -- (0.5ex,-0.75ex) -- (t2);
\end{tikzpicture}
}}

\newcommand{\cappart}{\hspace{0.5ex}\raisebox{-0.25ex}{\begin{tikzpicture}[bullet/.style={circle, fill, minimum size=2pt, inner sep=0pt, outer sep=0pt},nodes=bullet]
\node (b) at (0,-1.5ex) {};
\draw (b) -- (0,-0.75ex);
\draw (-0.25ex,-0.75ex) -- (0.25ex,-0.75ex);
\end{tikzpicture}
}}

\newcommand{\cuppart}{\hspace{0.5ex}\raisebox{0.65ex}{\begin{tikzpicture}[bullet/.style={circle, fill, minimum size=2pt, inner sep=0pt, outer sep=0pt},nodes=bullet]
\node (t) at (0,0) {};
\draw (t) -- (0,-0.75ex);
\draw (-0.25ex,-0.75ex) -- (0.25ex,-0.75ex);
\end{tikzpicture}
}}

\newcommand{\paircup}{\hspace{0.5ex}\raisebox{0.65ex}{\begin{tikzpicture}[bullet/.style={circle, fill, minimum size=2pt, inner sep=0pt, outer sep=0pt},nodes=bullet]
\node (t1) at (-0.5ex,0) {};
\node (t2) at (0.5ex,0) {};
\draw (t1) -- (-0.5ex,-0.75ex) -- (0.5ex,-0.75ex) -- (t2);
\end{tikzpicture}
}}

\newcommand{\paircap}{\hspace{0.5ex}\raisebox{-0.25ex}{\begin{tikzpicture}[bullet/.style={circle, fill, minimum size=2pt, inner sep=0pt, outer sep=0pt},nodes=bullet]
\node (b1) at (-0.5ex,0) {};
\node (b2) at (0.5ex,0) {};
\draw (b1) -- (-0.5ex,0.75ex) -- (0.5ex,0.75ex) -- (b2);
\end{tikzpicture}
}}

\newcommand{\tricky}{\hspace{0.5ex}\raisebox{0.65ex}{\begin{tikzpicture}[bullet/.style={circle, fill, minimum size=2pt, inner sep=0pt, outer sep=0pt},nodes=bullet]
			\node (t1) at (-1ex,0) {};
			\node (t) at (0,0) {};
			\draw (t) -- (0,-0.7ex);
			\draw (-0.25ex,-0.7ex) -- (0.25ex,-0.7ex);
			\node (t2) at (1ex,0) {};
			\draw (t1) -- (-1ex,-1.2ex) -- (1ex,-1.2ex) -- (t2);
		\end{tikzpicture}
}}

\newtheorem{thm}{Theorem}[section]

\newtheorem{lemma}[thm]{Lemma}
\newtheorem{prop}[thm]{Proposition}

\theoremstyle{definition}
\newtheorem{definition}[thm]{Definition}
\newtheorem{notation}[thm]{Notation}
\theoremstyle{remark}
\newtheorem{remark}[thm]{Remark}

\newtheorem{example}[thm]{Example}

\numberwithin{equation}{section}

\tikzstyle{vertex}=[circle]
\tikzstyle{goto}=[->,shorten >=1pt,>=stealth,semithick]

\begin{document}

\date{\today}
\title[Notes on tubes]{Tubular partitions and representations of the quantum automorphism group of a homogeneous rooted tree}
\author{Nathan Brownlowe}
\address{Nathan Brownlowe, School of Mathematics and Statistics, University of Sydney,
NSW 2006, Australia.}
\email{nathan.brownlowe@sydney.edu.au}

\author{David Robertson}
\address{David Robertson, School of Science and Technology, University of New England, NSW 2351, Australia.}
\email{david.robertson@une.edu.au}

 \thanks{Brownlowe was supported by the Australian Research Council grant DP200100155.}

\subjclass[2010]{}
\keywords{}
\thanks{}

\begin{abstract}
We introduce the rigid tensor category of tubular partitions, and use it to provide a combinatorial model for the representation category of the quantum automorphism group of a homogeneous rooted tree.
\end{abstract}

\maketitle

\setcounter{tocdepth}{1}


\section{Introduction}

Pontryagin duality says that a compact abelian group $G$ can be reconstructed from its dual group $\hat{G}$. Classical Tannaka-Krein duality extends this idea to non-abelian compact groups $G$, where the dual group is replaced by the category $\Rep(G)$ of finite-dimensional unitary representations of $G$. While $\Rep(G)$ is not in general a group, the structure of $\Rep(G)$ can be used to reconstruct $G$.

Classical Tannaka-Krein duality has been extended in various directions, including to the setting of compact quantum groups. In \cite{Wor87} Woronowicz introduced compact quantum groups as unital $C^*$-algebras equipped with a coassociative comultiplication and satisfying certain density conditions. The definition is such that a \textit{commutative} compact quantum group is just the $C^*$-algebra of continuous functions on a compact group. Tannaka-Krein duality was developed by Woronowicz in \cite{Wor88} for compact \textit{matrix} quantum groups, showing that any compact matrix quantum group can be reconstructed from the category of its finite-dimensional unitary representations, which is now a rigid $C^*$-tensor category that is non-symmetric. The non-symmetry of this category reflects the non-commutativity of the underlying $C^*$-algebra. In \cite{Wang97} Wang showed that Woronowicz's results extend naturally to cover all compact quantum groups. 

An important class of compact quantum groups are the quantum symmetric groups $S_n^+$ introduced by Wang in \cite{Wang1998} in his quest to find the correct notion of a quantum permutation of $n$ points. The underlying $C^*$-algebra of $S_n^+$ is the universal $C^*$-algebra generated by the entries of a magic unitary matrix---an $n \times n$ matrix of orthogonal projections where each row and column sum to $1$ (meaning $S_n^+$ is a compact matrix quantum group). When $n \leq 3$, $S_n^+$ is commutative and agrees with the classical symmetric group $S_n$; for $n \ge 4$, $S_n^+$ is non-classical and is infinite-dimensional as a $C^*$-algebra. 

The representation category of $S_n^+$ is very well understood, and forms part of the study of easy quantum groups. In \cite{BS2009} Banica and Speicher introduced a class of compact quantum groups whose representation category can be described by a category of partitions; these quantum groups are now commonly known as easy quantum groups. Symmetry of this representation category corresponds to the category of partitions containing a crossing partition, and the corresponding $C^*$-algebras are commutative. Any non-crossing category of partitons therefore corresponds to a quantum group, and the category of \textit{all} non-crossing partitions corresponds to the quantum symmetric groups of Wang. The wider class of easy quantum groups have given rise to a number of new examples of compact quantum groups, and their classification was completed in \cite{RW2016} after work in \cite{BS2009,BCP2010,Web2013,RW2014,RW2015}.

The notion of a quantum permutation of $n$ points leads naturally to questions of quantum analogues of automorphisms of more general objects. For $G$ a finite graph, the quantum automorphism group $\Aut^+(G)$ of $G$ was introduced by Bichon in \cite{Bichon03}, and is the universal compact quantum group coacting on the vertex set of $G$ in an adjacency preserving way. The underlying $C^*$-algebras can again be described as a universal $C^*$-algebra by imposing additional relations on the magic unitary defining the quantum symmetric group of the vertex set. The problem of whether a given finite graph has quantum symmetry or not, or in other words, whether the underlying $C^*$-algebra is commutative, is still open in full generality. It is known that almost all graphs have no quantum symmetry \cite{LMR2020}, which is a quantum analogue of the classical result that in a probabalistic sense almost all graphs have no classical symmetry. 

While it is uncommon for a graph to admit quantum symmetry, it was shown in \cite{JSW2020} that almost all trees admit quantum symmetry. A special class of trees was considered by the authors in \cite{BR2025}, where we examined the quantum automorphisms of infinite rooted homogeneous trees. In particular, for a finite set $X$ we introduced the quantum automorphism group $\A_X$ of the infinite rooted homogeneous tree $X^*$,  and we used $\A_X$ to introduce the notion of self-similarity for compact quantum groups.  

%

This paper is devoted to studying the representation category of $\A_X$. In \cite{BR2025}, $\A_X$ is shown to be an infinitely iterated free wreath product of the quantum symmetric group $S_X^+$, giving a quantum analogue of the description of the classical automorphism group $\Aut(X^*)$ as an infinitely iterated wreath product of the symmetric group $S_X$. This suggests that the structure of the representation category of $\A_X$ should be governed by combinatorial data involving categories of partitions, as is the case for quantum symmetric groups. Indeed, in \cite{LT2016} Lemeux and Tarrago describe the representation category of a free wreath product of a compact quantum group and a quantum symmetric group as a category of decorated partitions. In this paper we define a higher-dimensional partition we call a \emph{tubular partition}, and we show that the category of all tubular partitions determines the representation category of $\A_X$.

In section \ref{sec:prelims} we recall the necessary definitions and results about compact quantum groups and their representation categories, including Woronowicz's (and Wang's) notion of Tannaka-Krein duality. In section \ref{sec:tubularpartitions} we define the category of tubular partitions and we look at its structure. In section \ref{sec:tkduality} we state our main result that the rigid $C^*$-tensor category determined by the cateogry of tubular partitions is the representation category of $\A_X$.

\section{Preliminaries} \label{sec:prelims}

In this section we state the required definitions and results concerning representations of compact quantum groups, as well as the definition of the quantum automorphism group $\mathbb{A}_X$ of of a homogeneous rooted tree.  

\subsection{Compact quantum groups and their representations}

The following notion of a compact quantum group was introduced in \cite{Wor95}. 

\begin{definition} \cite[Definition 1]{Wor95} \label{CQG:densitydefn}
	A \emph{compact quantum group} is a pair $(A,\Delta)$ where $A$ is a unital $C^*$-algebra and $\Delta : A \to A \otimes A$ is a unital $*$-homomorphism such that
	\begin{enumerate}
		\item \label{cqg:coassoc} $(\Delta\otimes \id)\Delta = (\id\otimes\Delta) \Delta$
		\item $\overline{(A\otimes 1)\Delta(A)} = A \otimes A = \overline{(1\otimes A)\Delta(A)}$.
	\end{enumerate}
	We call $\Delta$ the \emph{comultiplication} and (\ref{cqg:coassoc}) is called \emph{coassociativity}.
\end{definition}

We include the following definitions for completeness; more details about the representation theory of compact quantum groups can be found in \cite{NeshTuset}.

\begin{definition}
Let $(A,\Delta)$ be a compact quantum group. A \emph{finite-dimensional unitary representation} $u$ of $A$ is  a finite-dimensional Hilbert space $\HH$ and a unitary $u\in \BB(\HH)\otimes A$ satisfying (using the standard leg-numbering notation)
\begin{equation} \label{CQGrep}
(\id\otimes\Delta)(u) = u_{12}u_{13} \in \BB(\HH)\otimes A \otimes A.
\end{equation}
\end{definition}

If we choose a basis $\{e_i : 1\leq i \leq d_u:=\dim(\HH)\}$ and identify $\BB(\HH)\otimes A$ with $M_{d_u}(A)$ then $u = (u_{i,j})_{i,j=1}^{d_u}$ and (\ref{CQGrep}) becomes the identity
\[
 \Delta(u_{i,j}) = \sum_{k=1}^{d_u} u_{i,k}\otimes u_{k,j}.
\]
\begin{definition}
Let $u$ and $v$ be finite-dimensional unitary representations of a compact quantum group $(A,\Delta)$ on Hilbert spaces $\HH$ and $\KK$. An \emph{intertwiner} from $u$ to $v$ is a linear map $T\in\BB(\HH,\KK)$ satisfying
\[
 (T\otimes 1_A)u = v(T\otimes 1_A).
\]
\end{definition}

\subsection{The category of finite-dimensional unitary representations $\Rep(A)$}

The objects of $\Rep(A)$ are finite-dimensional unitary representations $u$ of $A$, and a morphism from $u$ to $v$ is an intertwiner $T\in\BB(\HH,\KK)$. The identity morphism $\id_u$ is the identity intertwiner $I\in\BB(\HH)$ and composition of morphisms is the usual composition of linear maps. Given $u,v\in\Rep(A)$, the tensor product is
\[
 u\otimes v = u_{13}v_{23} \in \BB(\HH\otimes\KK)\otimes A.
\]
This turns $\Rep(A)$ into a $C^*$-tensor category. In fact, $\Rep(A)$ is a rigid $C^*$-tensor category, where the dual of $u$ is the contragredient representation $u^c$.


\subsection{Tannaka-Krein duality for compact quantum groups}

\begin{definition}\label{def: model}
Let $\RR$ be a complete concrete rigid $C^*$-tensor category. A pair $M = (B,\{v^\alpha\}_{\alpha\in\RR})$ is a \emph{model} of $\RR$ if
\begin{itemize}
 \item[(i)] $B$ is a unital $C^*$-algebra
 \item[(ii)] for each $\alpha\in\RR$, $v^\alpha$ is a unitary element of $\BB(\HH_\alpha)\otimes B$ where $\HH_\alpha$ is a finite-dimensional Hilbert space
 \item[(iii)] $v^{\alpha\otimes\beta} = v^\alpha\otimes v^\beta$ for all $\alpha,\beta\in\RR$
 \item[(iv)] $(T\otimes 1_B)v^\alpha = v^\beta(T\otimes 1_B)$ for all $\alpha,\beta\in\RR$ and $T\in\Hom(\alpha,\beta)$.
\end{itemize}
\end{definition}

\begin{definition}
Suppose $\RR$ is generated by a set of objects $N\subset\RR$. A pair $(B,\{u^\alpha\}_{\alpha\in N})$ is called \emph{$\RR$-admissible} if there is a model $M = (B,\{v^\alpha\}_{\alpha\in\RR})$ with $u^\alpha = v^\alpha$ for all $\alpha \in N$. An $\RR$-admissible pair $(B,\{u^\alpha\}_{\alpha\in N})$ is called \emph{universal} if, for any other $\RR$-admissible pair $(C,\{w^\alpha\}_{\alpha\in N})$ there exists a $C^*$-homomorphism $\varphi:B\to C$ such that
\[
 (\id\otimes\varphi)u^\alpha = w^\alpha
\]
for all $\alpha\in N$.
\end{definition}

The following result was first proved in \cite{Wor88} for compact matrix quantum groups, and then extended to compact quantum groups in \cite[A.3. Theorem]{Wang97}. 

\begin{thm}\label{thm:Wang}
Let $\RR$ be a complete concrete rigid $C^*$-tensor category generated by $N\subset \RR$. Then there is a unique universal $\RR$-admissible pair $(A,\{u^\alpha\}_{\alpha\in N})$ with uniquely determined model $(A,\{u^\alpha\}_{\alpha\in \RR})$ such that $A$ is a compact quantum group with comultiplication $\Delta$, and for all $\alpha\in\RR$, $u^\alpha$ is a finite-dimensional unitary representation of $(A,\Delta)$ on $\HH_\alpha$.
\end{thm}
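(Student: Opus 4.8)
The plan is to carry out the Tannaka--Krein reconstruction in two stages: first build the $C^*$-algebra $A$ together with its comultiplication as a universal object, and only afterwards promote $A$ to a compact quantum group by verifying the density conditions of \defref{CQG:densitydefn}. For the first stage I would define $A$ as the universal unital $C^*$-algebra generated by symbols $u^\alpha_{i,j}$ for $\alpha\in N$ and $1\le i,j\le\dim\HH_\alpha$, subject to the relations that (a) each matrix $u^\alpha=(u^\alpha_{i,j})$ is unitary in $M_{\dim\HH_\alpha}(A)$, and (b) for all $n,m$ and every morphism $T$ in $\RR$ between tensor products $\alpha_1\otimes\cdots\otimes\alpha_n$ and $\beta_1\otimes\cdots\otimes\beta_m$ of objects of $N$, the intertwiner relation $(T\otimes 1)(u^{\alpha_1}_{1,n+1}\cdots)=(u^{\beta_1}\cdots)(T\otimes 1)$ holds (with the tensor product of representations formed leg-wise as in the definition of $\Rep(A)$). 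Unitarity forces $\|u^\alpha_{i,j}\|\le 1$ in every $C^*$-representation of these relations, so the universal $C^*$-norm is finite and $A$ exists. The universal mapping property is then immediate: any other $\RR$-admissible pair $(C,\{w^\alpha\}_{\alpha\in N})$ comes equipped with a model, and properties (ii) and (iv) of \defref{def: model} say exactly that the $w^\alpha$ satisfy (a) and (b), so there is a unique $*$-homomorphism $\varphi\colon A\to C$ with $(\id\otimes\varphi)u^\alpha=w^\alpha$; uniqueness of universal objects is automatic.

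To define $\Delta$ I would check that the elements $w^\alpha:=u^\alpha_{12}u^\alpha_{13}\in\BB(\HH_\alpha)\otimes(A\otimes A)$ again satisfy relations (a) and (b): $w^\alpha$ is unitary as a product of unitaries, and each intertwiner relation lifts leg-wise from the corresponding relation defining $A$. Hence $(A\otimes A,\{w^\alpha\}_{\alpha\in N})$ is a candidate admissible pair, and applying the universal property yields a unique $*$-homomorphism $\Delta\colon A\to A\otimes A$ with $(\id\otimes\Delta)u^\alpha=u^\alpha_{12}u^\alpha_{13}$; on entries this reads $\Delta(u^\alpha_{i,j})=\sum_k u^\alpha_{i,k}\otimes u^\alpha_{k,j}$, so each $u^\alpha$ ($\alpha\in N$) satisfies \eqref{CQGrep} and is a finite-dimensional unitary representation. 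Coassociativity follows because $(\Delta\otimes\id)\Delta$ and $(\id\otimes\Delta)\Delta$ are both $*$-homomorphisms $A\to A^{\otimes 3}$ sending $u^\alpha$ to $u^\alpha_{12}u^\alpha_{13}u^\alpha_{14}$, hence agree on generators; equivalently, this is uniqueness in the universal property.

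Next I would extend the assignment $\alpha\mapsto u^\alpha$ from $N$ to a genuine model on all of $\RR$, which both secures $\RR$-admissibility of $(A,\{u^\alpha\}_{\alpha\in N})$ and is needed to state the theorem. Since $N$ generates $\RR$ and $\RR$ is complete, every object is built from objects of $N$ by tensor products, direct sums, and subobjects (together with duals, supplied by rigidity): I set $u^{\alpha\otimes\beta}=u^\alpha_{13}u^\beta_{23}$, I define $u^\gamma=(w^*\otimes 1)u^\delta(w\otimes 1)$ whenever $\gamma$ is a subobject of $\delta$ via an isometry $w\in\Hom(\HH_\gamma,\HH_\delta)$, and I dualize using the conjugate equations. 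Relation (b) guarantees that $ww^*\in\End(\delta)$ is an intertwiner, whence the standard computation shows $u^\gamma$ is unitary and a representation, and more generally it forces property (iv) of \defref{def: model} for \emph{every} morphism of $\RR$; this simultaneously proves the construction is independent of the chosen realization of $\gamma$ and that the model $(A,\{u^\gamma\}_{\gamma\in\RR})$ is uniquely determined.

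The \emph{main obstacle} is the verification of the density conditions $\overline{(A\otimes 1)\Delta(A)}=A\otimes A=\overline{(1\otimes A)\Delta(A)}$, since nothing in the universal construction exhibits these directly. My plan is to pass to the dense unital $*$-subalgebra $\AA\subseteq A$ spanned by all matrix coefficients $\{u^\gamma_{i,j}:\gamma\in\RR\}$ and equip it with a Hopf $*$-algebra structure. The counit $\varepsilon\colon A\to\C$, $\varepsilon(u^\alpha_{i,j})=\delta_{i,j}$, is obtained from the universal property applied to $(\C,\{I_{\HH_\alpha}\}_{\alpha\in N})$, whose identity matrices trivially satisfy (a) and (b); the counit identities $(\varepsilon\otimes\id)\Delta=\id=(\id\otimes\varepsilon)\Delta$ are then routine on generators. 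The antipode $S$ on $\AA$ is where rigidity is indispensable: for each $\gamma$ the conjugate object $\bar\gamma$ and its conjugate-equation (duality) morphisms produce the entries of $S(u^\gamma_{i,j})$ from those of $u^{\bar\gamma}$, and one checks that $S$ is a well-defined anti-homomorphism satisfying the antipode axioms. Once $(\AA,\Delta,\varepsilon,S)$ is a Hopf $*$-algebra, the canonical maps $a\otimes b\mapsto(a\otimes 1)\Delta(b)$ and $a\otimes b\mapsto\Delta(a)(1\otimes b)$ are bijective on the algebraic tensor product $\AA\otimes_{\mathrm{alg}}\AA$, with inverses expressed through $S$; surjectivity gives $(\AA\otimes_{\mathrm{alg}} 1)\Delta(\AA)=\AA\otimes_{\mathrm{alg}}\AA$, and taking closures in $A\otimes A$ yields both density conditions. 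This establishes that $(A,\Delta)$ is a compact quantum group, completing the proof.
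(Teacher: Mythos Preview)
The paper does not prove \thmref{thm:Wang}; it is quoted from the literature (Woronowicz \cite{Wor88} for compact matrix quantum groups, extended by Wang \cite{Wang97}), so there is no in-paper argument to compare against. Your outline is the standard reconstruction strategy found in those references and in \cite{NeshTuset}: build $A$ as a universal $C^*$-algebra on matrix entries subject to unitarity and intertwiner relations, obtain $\Delta$ and $\varepsilon$ from the universal property, extend to a model on all of $\RR$ via tensor products, subobjects and direct sums, and finally deduce the cancellation/density laws from a Hopf $*$-algebra structure on the span of matrix coefficients. At this level of detail the plan is sound.

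Two points deserve more care than your sketch indicates. First, the antipode: you propose to define $S$ on the coefficient algebra $\AA$ using the conjugate equations, but well-definedness of $S$ as a \emph{linear} map is not automatic from the universal $C^*$-algebra description, since distinct $\gamma$ may have linearly dependent matrix coefficients in $A$. The clean way around this is to note that rigidity forces each $\overline{u^\gamma}:=(u^\gamma_{i,j}{}^{*})_{i,j}$ to be equivalent to a unitary representation (via the intertwiner relations for the conjugate morphisms), and then use the resulting invertibility of the contragredient to show the maps $a\otimes b\mapsto (a\otimes 1)\Delta(b)$ and $a\otimes b\mapsto \Delta(a)(1\otimes b)$ are surjective on $\AA\otimes_{\mathrm{alg}}\AA$ directly, without first constructing $S$ globally; this is how \cite{NeshTuset} organises the argument. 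Second, when extending the model from $N$ to $\RR$ you must check that the definition $u^\gamma=(w^*\otimes 1)u^\delta(w\otimes 1)$ for a subobject is independent of the choice of $(\delta,w)$; this follows because any two such presentations are linked by a partial isometry in $\Hom(\delta,\delta')$, which is an intertwiner by construction, but it should be said explicitly since uniqueness of the model is part of the statement.
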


\subsection{Quantum automorphisms of a rooted homogenous tree}
Let $X$ be a finite set, and consider the infinite homogeneous rooted tree $X^*$. Recall from \cite[Definition~3.2]{BR2025} and \cite[Theorem~3.4]{BR2025} that the quantum automorphism group $X^*$ is the compact quantum group $(\A_X,\Delta)$, where  $\mathbb{A}_X$ is the universal $C^*$-algebra generated by elements $\{a_{u,v}: u,v\in X^n, n\geq 0\}$ subject to the relations
	\begin{enumerate}
		\item \label{qaut:identity} $a_{\varnothing,\varnothing} = 1$,
		\item \label{qaut:proj} for any $n\geq 0, u,v\in X^n$, $a_{u,v}^* = a_{u,v}^2 = a_{u,v}$, and
		\item \label{qaut:sum} for any $n\geq 0, u,v\in X^n$ and $x\in X$,
		\[
		a_{u,v} = \sum_{y\in X} a_{ux,vy} = \sum_{z\in X} a_{uz,vx}.
		\]
	\end{enumerate}
The comultiplication $\Delta : \A_X \to \A_X\otimes \A_X$ satisfies
	\[
	\Delta(a_{u,v}) = \sum_{w\in X^n} a_{u,w} \otimes a_{w,v},
	\]
	for all $u,v\in X^n$ and $n\geq 1$

\begin{remark}\label{rmk:consequencefromAX}
	Note the following consequences of the defining relations of $\A_X$:
	\begin{itemize}
		\item[(i)] As is pointed out in \cite[Remarks~3.3]{BR2025}, we have $a_{x,z}a_{y,z}=\delta_{x,y}a_{x,z}$ for all $x,y,z\in X$. 
		\item[(ii)] If we take $u,v=\emptyset$ in relation (3) above, then we get $\sum_{y\in X}a_{x,y}=\sum_{x\in X}a_{x,y}=1_{\A_X}$ for all $x,y\in X$. 
	\end{itemize}
\end{remark}

\begin{notation}\label{note: a_n}
For $n\in\N$ we write $a^{(n)} := (a_{u,v})_{u,v\in X^n}$. Note that $a^{(0)}=(1_{\A_X})$. It follows from the defining relations of $\A_X$ that each $a^{(n)}\in \Rep(\A_X)$ with $(a^{(n)})^*=(a_{v,u})_{u,v\in X^n}$.  
	\end{notation}
	

\section{The category of tubular partitions} \label{sec:tubularpartitions}

In this section we define a category $\TT$, the morphisms of which we will call \emph{tubular partitions.} This category will be an increasing union of small categories $\CC_k$ for $k\geq 1$, identified via embedding functors. Defining the categories $\CC_k$ is done through induction.

\subsection{The base category}

\subsubsection{Category structure} \label{subsec:catstructure}

The base category $\CC_1$ is the category of \emph{non-crossing partitions}, typically denoted as $\NC$. We will now recap the key definitions we need about this category, but further details can be found in \cite{BS2009}.

The objects in $\NC$ are positive integers $n\geq 0$ and a morphism $p\in\NC(m,n)$ is a partition of $m+n$ points $\{1,\dots,m+n\}$ into blocks, such that there are no points $x_1<x_2<x_3<x_4$ with $\{x_1,x_3\}$ and $\{x_2,x_4\}$ contained in distinct blocks. We write $p = \{B_1,\dots,B_l\}$ where each $B_i \subseteq \{1,\dots,m+n\}$. The upper and lower points in a block $B_i$ are denoted by $U_{B_i} = B_i\cap\{1,\dots,m\}$ and $L_{B_i} = B_i\cap\{m+1,\dots,m+n\}$, respectively.

For example, consider $p = \{\{1,5\},\{2\},\{3,4\}\} \in \NC(2,3)$. We can depict $p$ graphically by drawing $2$ upper points and $3$ lower points, then joining points in the same block with a line:
\begin{center}
\begin{tikzpicture}[scale=1.5,every node/.style={inner sep=-1}]
  \def\dx{0.5}
  \def\dy{0.2} 
  \node[label={[label distance=5]above:1}] (t1) at (0,0) {$\bullet$};
  \node[label={[label distance=5]above:2}] (t2) at (\dx,0) {$\bullet$};
  \node[label={[label distance=5]below:5}] (b5) at (0,-4*\dy) {$\bullet$};
  \node[label={[label distance=5]below:4}] (b4) at (\dx,-4*\dy) {$\bullet$};
  \node[label={[label distance=5]below:3}] (b3) at (2*\dx,-4*\dy) {$\bullet$};
  \draw (t1) -- (b5);
  \draw (t2) -- (\dx,-\dy);
  \draw (\dx- 0.05,-\dy) -- (\dx+ 0.05,-\dy);
  \draw (b4) -- (\dx,-3*\dy) -- (2*\dx,-3*\dy) -- (b3);
\end{tikzpicture}
\end{center}
Note that the upper points are labelled left-to-right while the lower points are labelled right-to-left. This ensures that the lines joining points in a non-crossing partition are visually non-crossing. With this convention set, we will not continue to label points in partitions.

Composition in $\NC$ is best described graphically: given $p\in\NC(m,n)$ and $q\in\NC(n,r)$ the composition $q\circ p\in\NC(m,r)$ is the partition obtained by first stacking the diagram representing $p$ on top of the diagram representing $q$, and then identifying the middle $n$ points and deleting any block which has no connection to the $m$ upper or $r$ lower points. For example, given $p$ above and $q = \{\{1,4,5,7\},\{2,3\},\{6\}\} \in \NC(3,4)$ we can depict the composition as follows:
\begin{center}
\begin{tikzpicture}[scale=2,every node/.style={inner sep=-1}]
  \def\dx{0.5}
  \def\dy{0.2}
\begin{scope}   
  \node (p) at (-0.5*\dx,-2*\dy) {$p = \ $};
  \node (t1) at (0,0) {$\bullet$};
  \node (t2) at (\dx,0) {$\bullet$};
  \node (b5) at (0,-4*\dy) {$\bullet$};
  \node (b4) at (\dx,-4*\dy) {$\bullet$};
  \node (b3) at (2*\dx,-4*\dy) {$\bullet$};
  \draw (t1) -- (b5);
  \draw (t2) -- (\dx,-\dy);
  \draw (\dx- 0.05,-\dy) -- (\dx+ 0.05,-\dy);
  \draw (b4) -- (\dx,-3*\dy) -- (2*\dx,-3*\dy) -- (b3);
\end{scope}
\begin{scope}[shift={(0,-5*\dy)}]
  \node (q) at (-0.5*\dx,-2*\dy) {$q = \ $};
  \node (t1) at (0,0) {$\bullet$};
  \node (t2) at (\dx,0) {$\bullet$};
  \node (t3) at (2*\dx,0) {$\bullet$};
  \node (b7) at (0,-4*\dy) {$\bullet$};
  \node (b6) at (\dx,-4*\dy) {$\bullet$};
  \node (b5) at (2*\dx,-4*\dy) {$\bullet$};
  \node (b4) at (3*\dx,-4*\dy) {$\bullet$};
  \draw (t1) -- (b7);
  \draw (t2) -- (\dx,-\dy) -- (2*\dx,-\dy) -- (t3);
  \draw (0,-2*\dy) -- (3*\dx,-2*\dy) -- (b4);
  \draw (2*\dx,-4*\dy) -- (2*\dx,-2*\dy);
  \draw (b6) -- (\dx,-3*\dy);
  \draw (\dx- 0.05,-3*\dy) -- (\dx+ 0.05,-3*\dy);
\end{scope}
\begin{scope}[shift={(5*\dx,-3*\dy)}]
  \node (arrow) at (-\dx,-2*\dy) {$\mapsto$};
  \node (t1) at (0,0) {$\bullet$};
  \node (t2) at (\dx,0) {$\bullet$};
  \node (b7) at (0,-4*\dy) {$\bullet$};
  \node (b6) at (\dx,-4*\dy) {$\bullet$};
  \node (b5) at (2*\dx,-4*\dy) {$\bullet$};
  \node (b4) at (3*\dx,-4*\dy) {$\bullet$};
  \node (qp) at (4.2*\dx,-2*\dy) {$= q\circ p$};
  \draw (t2) -- (\dx,-\dy);
  \draw (\dx- 0.05,-\dy) -- (\dx+ 0.05,-\dy);
  \draw (t1) -- (b7);
  \draw (0,-2*\dy) -- (3*\dx,-2*\dy) -- (b4);
  \draw (2*\dx,-4*\dy) -- (2*\dx,-2*\dy);
  \draw (b6) -- (\dx,-3*\dy);
  \draw (\dx- 0.05,-3*\dy) -- (\dx+ 0.05,-3*\dy);
\end{scope}
\end{tikzpicture}
\end{center}

We see that $q\circ p = \{\{1,3,4,6\},\{2\},\{5\}\} \in \NC(2,4)$.

In what follows, we will frequently make use of the following single-block partitions in $\NC$:
\begin{itemize}
\item $\idpart \in\NC(1,1)$, which is also the identity morphism  $\id_1$,
\item $\cuppart\in\NC(1,0)$ and $\cappart\in\NC(0,1)$,
\item $\shirt\in\NC(2,1)$ and $\pants\in \NC(1,2)$, and 
\item $\paircup\in\NC(2,0)$ and $\paircap\in \NC(0,2)$.
\end{itemize}

Any partition $p \in \NC(m,n)$ has an \emph{adjoint} $p^* \in \NC(n,m)$ obtained by reflecting $p$ with respect to a horizontal axis between $m$ and $n$. For example, with the same $p$ as defined above we have
\begin{center}
\begin{tikzpicture}[scale=2,every node/.style={inner sep=-1}]
  \def\dx{0.5}
  \def\dy{0.2}   
  \node (p) at (-\dx,-2*\dy) {$p^* = $};
  \node (t1) at (0,-4*\dy) {$\bullet$};
  \node (t2) at (\dx,-4*\dy) {$\bullet$};
  \node (b5) at (0,0) {$\bullet$};
  \node (b4) at (\dx,0) {$\bullet$};
  \node (b3) at (2*\dx,0) {$\bullet$};
  \draw (t1) -- (b5);
  \draw (t2) -- (\dx,-3*\dy);
  \draw (\dx- 0.05,-3*\dy) -- (\dx+ 0.05,-3*\dy);
  \draw (b4) -- (\dx,-\dy) -- (2*\dx,-\dy) -- (b3);
  
\end{tikzpicture}
\end{center}

\subsubsection{Monoidal structure and rigidity}

The monoidal structure on $\CC_1=\NC$ is given on objects by $m\otimes n := m+n$, and for morphisms $p$ and $q$, $p\otimes q$ is graphically represented by drawing $p$ and $q$ horizontally adjacent. For example, given the same $p$ and $q$ as above we have
\begin{center}
\begin{tikzpicture}[scale=2,every node/.style={inner sep=-1}]
  \def\dx{0.5}
  \def\dy{0.2}
\begin{scope}
  \node (potimesq) at (-\dx,-2*\dy) {$p\otimes q = $};
  \node (t1) at (0,0) {$\bullet$};
  \node (t2) at (\dx,0) {$\bullet$};
  \node (b5) at (0,-4*\dy) {$\bullet$};
  \node (b4) at (\dx,-4*\dy) {$\bullet$};
  \node (b3) at (2*\dx,-4*\dy) {$\bullet$};
  \draw (t1) -- (b5);
  \draw (t2) -- (\dx,-\dy);
  \draw (\dx- 0.05,-\dy) -- (\dx+ 0.05,-\dy);
  \draw (b4) -- (\dx,-3*\dy) -- (2*\dx,-3*\dy) -- (b3);
\end{scope}
\begin{scope}[shift={(2*\dx,0)}]
  \node (t1) at (0,0) {$\bullet$};
  \node (t2) at (\dx,0) {$\bullet$};
  \node (t3) at (2*\dx,0) {$\bullet$};
  \node (b7) at (\dx,-4*\dy) {$\bullet$};
  \node (b6) at (2*\dx,-4*\dy) {$\bullet$};
  \node (b5) at (3*\dx,-4*\dy) {$\bullet$};
  \node (b4) at (4*\dx,-4*\dy) {$\bullet$};
  \draw (t1) -- (0,-2*\dy) -- (\dx,-2*\dy) -- (b7);
  \draw (t2) -- (\dx,-\dy) -- (2*\dx,-\dy) -- (t3);
  \draw (\dx,-2*\dy) -- (4*\dx,-2*\dy) -- (b4);
  \draw (3*\dx,-4*\dy) -- (3*\dx,-2*\dy);
  \draw (b6) -- (2*\dx,-3*\dy);
  \draw (2*\dx- 0.05,-3*\dy) -- (2*\dx+ 0.05,-3*\dy);
\end{scope}
\end{tikzpicture}
\end{center}
Hence $p\otimes q = \{\{1,12\},\{2\},\{3,6,7,9\},\{4,5\},\{8\},\{10,11\}\} \in \NC(5,7)$.

We will not define the notion of a \textit{rigid} monoidal category, but we remark that objects in $\NC$ are self-dual, and for every $m\in \NC$ the morphisms giving rigidity are $\eta_m\in \CC_1(0,2m), \epsilon_m \in \CC_1(2m,0)$ given by

\begin{center}
\vspace{1ex}
\begin{tikzpicture}[scale=1.2,every node/.style={inner sep=-1}]
  \def\dx{0.5}
  \def\dy{0.2}
\begin{scope}
  \node (eta) at (-1.5*\dx,1.5*\dy) {$\eta_m=$};
  \node (1) at (0,0) {$\bullet$};
  \node (2) at (\dx,0) {$\bullet$};
  \node (dots1) at (2*\dx,0) {$\dots$};
  \node (3) at (3*\dx,0) {$\bullet$};
  \node (4) at (4*\dx,0) {$\bullet$};
  \node (dots2) at (5*\dx,0) {$\dots$};
  \node (5) at (6*\dx,0) {$\bullet$};
  \node (6) at (7*\dx,0) {$\bullet$};
  
  \draw (1) to (0,3*\dy) to (7*\dx,3*\dy) to (6);
  \draw (2) to (\dx,2*\dy) to (6*\dx,2*\dy) to (5);
  \draw (3) to (3*\dx,\dy) to (4*\dx,\dy) to (4);
\end{scope}
\begin{scope}[shift={(12*\dx,3*\dy)}]
  \node (eps) at (-1.5*\dx,-1.5*\dy) {$\epsilon_m=$};
  \node (1) at (0,0) {$\bullet$};
  \node (2) at (\dx,0) {$\bullet$};
  \node (dots1) at (2*\dx,0) {$\dots$};
  \node (3) at (3*\dx,0) {$\bullet$};
  \node (4) at (4*\dx,0) {$\bullet$};
  \node (dots2) at (5*\dx,0) {$\dots$};
  \node (5) at (6*\dx,0) {$\bullet$};
  \node (6) at (7*\dx,0) {$\bullet$};
  
  \draw (1) to (0,-3*\dy) to (7*\dx,-3*\dy) to (6);
  \draw (2) to (\dx,-2*\dy) to (6*\dx,-2*\dy) to (5);
  \draw (3) to (3*\dx,-\dy) to (4*\dx,-\dy) to (4);
\end{scope}
\end{tikzpicture}
\vspace{1ex}
\end{center}

\subsection{Induction}

\subsubsection{Category structure}

Suppose $\CC_k$ is given and define the objects of a category $\CC_{k+1}$ by
\[
 \CC_{k+1} = \{ (m;\alpha_1,\dots,\alpha_m) : m\in\CC_1, \alpha_i\in\CC_k \text{ for }1\le i\le m\}.
\]
Given objects $\gamma = (m;\alpha_1,\dots,\alpha_m)$ and $\delta=(n;\beta_1,\dots,\beta_n)$ a morphism between them is $(p;\phi_1,\dots,\phi_l)$ where $p\in\NC(m,n)$, and if $p = \{B_1,\dots,B_l\}$, then
\[
 \phi_i \in \CC_k\left(\bigotimes_{j\in U_{B_i}} \alpha_j,\bigotimes_{j'\in L_{B_i}} \beta_{j'}\right).
\]

Graphically, we represent an object $\gamma = (m;\alpha_1,\dots,\alpha_m)$ by drawing $m$ ellipses, with the $i$th ellipse containing the graphical representation of $\alpha_i$. For example, consider the object $\gamma = (2;(3;1,2,0),(1;2)) \in \CC_3$. This can be represented as the following set of nested ellipses:

\begin{center}
\begin{tikzpicture}[every node/.style={inner sep =2pt}]
\def\dx{0.7}
\def\dy{2.5}

\node (l1t1) at (0,0) {$\bullet$};
\node (l1t2) at (2*\dx,0) {$\bullet$};
\node (l1t3) at (3*\dx,0) {$\bullet$};
\node (l1tphantom) at (5*\dx,0) {\textcolor{white}{$\bullet$}};
\node (l1t4) at (10*\dx,0) {$\bullet$};
\node (l1t5) at (11*\dx,0) {$\bullet$};
\node [fit=(l1t1)] (l2t1) [tube] {};
\node [fit=(l1t2)(l1t3)] (l2t2) [tube] {};
\node [fit=(l1tphantom)] (l2t3) [tube] {};
\node [fit=(l1t4)(l1t5)] (l2t4) [tube] {};
\node [fit=(l2t1)(l2t2)(l2t3)] (l3t1) [tube] {};
\node [fit=(l2t4)] (l3t1) [tube] {};

\end{tikzpicture}
\end{center}
To represent a morphism, we treat each of the ellipses as the opening to a tube that represents a corresponding partition defining the morphism. For example, consider a morphism $\rho \in \CC_3$. 

\begin{center}
\begin{tikzpicture}[every node/.style={inner sep =2pt}]
\def\dx{0.5}
\def\dy{4}
\node (l1t1) at (0,0) {$\bullet$};
\node (l1t2) at (3*\dx,0) {$\bullet$};
\node (l1t3) at (4*\dx,0) {$\bullet$};
\node (l1tphantom) at (7*\dx,0) {\textcolor{white}{$\bullet$}};
\node (l1t4) at (13*\dx,0) {$\bullet$};
\node (l1t5) at (14*\dx,0) {$\bullet$};
\node (l1b1) at (3*\dx,-\dy) {$\bullet$};
\node (l1b2) at ($(l1b1.center)+(\dx,0)$) {$\bullet$};
\node (l1b3) at ($(l1b2.center)+(\dx,0)$) {$\bullet$};
\node (l1b4) at ($(l1b3.center)+(4*\dx,0)$) {$\bullet$};
\node (terminal1) at ($(l1t1.center)+(0,-0.2*\dy)$) {};
\draw [thick] (l1t1.center) -- (terminal1.center);
\draw [thick] (terminal1.west) -- (terminal1.east);
\node [thick] (terminal2) at ($(l1t4.center)+(0,-0.2*\dy)$) {};
\draw [thick] (l1t4.center) -- (terminal2.center);
\draw [thick] (terminal2.west) -- (terminal2.east);
\draw [thick] (l1t2.center) r-du[du distance =20] (l1t3.center);
\draw [thick] (l1b1.center) r-ud[ud distance =20] (l1b2.center);
\draw [thick] (l1t5.center) |-|[ratio=0.57] (l1b3.center);
\draw [thick] (l1t5.center) |-|[ratio=0.57] (l1b4.center);

\node [fit=(l1t1)] (l2t1) [tube] {};
\node [fit=(l1t2)(l1t3)] (l2t2) [tube] {};
\node [fit=(l1tphantom)] (l2t3) [tube] {};
\node [fit=(l1t4)(l1t5)] (l2t4) [tube] {};
\node [fit=(l1b1)(l1b2)(l1b3)] (l2b1) [tube] {};
\node [fit=(l1b4)] (l2b2) [tube] {};
\draw[rounded corners=20] (l2t1.west) r-du[du distance=50] (l2t3.east);
\draw[rounded corners=10] (l2t1.east) r-du[du distance=30] (l2t3.west);
\draw (l2t2.west) to[out=-90,in=-90,distance=35] (l2t2.east);
\draw[rounded corners] (l2b1.east) r-ud[ud distance=30] (l2b2.west);
\draw[rounded corners=10] (l2t4.west) |- ($(l2t4.center)!0.5!(l2b1.center)$) [rounded corners=20] -| (l2b1.west) ;
\draw[rounded corners=10] (l2t4.east) |- ($(l2t4.east)!0.65!(l2b2.east)+(0.3*\dx,0)$) [rounded corners] -| (l2b2.east) ;

\node [fit=(l2t1)(l2t2)(l2t3)] (l3t1) [tube,inner sep=0pt] {};
\node [fit=(l2t4)] (l3t2) [tube,inner sep=0pt] {};
\node [fit=(l2b1)(l2b2)] (l3b1) [tube,inner sep=0pt] {};
\draw[rounded corners=20] (l3t1.west) |-|[ratio=0.7] (l3b1.west);
\draw[rounded corners=20] (l3t2.east) |-|[ratio=0.7] (l3b1.east);
\draw[rounded corners] (l3t1.east) r-du[du distance=20] (l3t2.west);


\end{tikzpicture}
\end{center}
%

This morphism should be thought of as a nesting of three levels of non-crossing partitions, with each level fully contained in the one above. The domain of $\rho$ is $\gamma$ from above, and the codomain is $\delta = (1;(2;3,1))$. We can write $\rho = (\shirt;\phi_1)$, where $\phi_1 \in \CC_2((4;1,2,0,2),(2;3,1))$ can be written as $(q;(r_1,r_2,r_3))$ with 
\[
 q=\tricky \otimes \pants\text{ and }r_1 = \cuppart ,\,r_2 = \paircup,\,r_3 = \cuppart\otimes\paircap\otimes\pants.
 \]
(Notice that $r_1, r_2$ and $r_3$ are completely contained in the blocks of $q$.) In other words, we have 
\[
\rho= \big(\shirt;\big(\tricky \otimes \pants; \cuppart , \paircup  ,  \cuppart\otimes \paircap  \otimes \pants\big)\big).
\]

Composition in $\CC_k$ is defined analogously to composition in $\NC$, where morphisms are stacked vertically, and the graphical representation of the middle objects are identified. Then any block in the vertical stack with no connection to the upper or lower openings is deleted. 

\begin{example}
The following diagram demonstrates the composition of morphisms $\rho \in \CC_2((3;2,3,1),(2;2,1))$ and $\pi \in \CC_2((1;1),(3;2,3,1))$.
\begin{center}
\scalebox{1}{
\begin{tikzpicture}[every node/.style={inner sep =2pt}]
\def\dx{0.5}
\def\dy{2.5}
\begin{scope}

\node (pi) at (-2*\dx,-0.5*\dy) {$\pi = \ \ $};
\node (rho) at (-2*\dx,-2*\dy) {$\rho = \ \ $};

\node (l1t1) at (5*\dx,0) {$\bullet$};
\node [fit=(l1t1)] (l2t1) [tube] {};

\node (l1tm1) at (0,-\dy) {$\bullet$};
\node (l1tm2) at (\dx,-\dy) {$\bullet$};
\node (l1tm3) at (4*\dx,-\dy) {$\bullet$};
\node (l1tm4) at (5*\dx,-\dy) {$\bullet$};
\node (l1tm5) at (6*\dx,-\dy) {$\bullet$};
\node (l1tm6) at (9.5*\dx,-\dy) {$\bullet$};
\node [fit=(l1tm1)(l1tm2)] (l2tm1) [tube] {};
\node [fit=(l1tm3)(l1tm4)(l1tm5)] (l2tm2) [tube] {};
\node [fit=(l1tm6)] (l2tm3) [tube] {};

\node (l1bm1) at (0,-1.5*\dy) {$\bullet$};
\node (l1bm2) at (\dx,-1.5*\dy) {$\bullet$};
\node (l1bm3) at (4*\dx,-1.5*\dy) {$\bullet$};
\node (l1bm4) at (5*\dx,-1.5*\dy) {$\bullet$};
\node (l1bm5) at (6*\dx,-1.5*\dy) {$\bullet$};
\node (l1bm6) at (9.5*\dx,-1.5*\dy) {$\bullet$};
\node [fit=(l1bm1)(l1bm2)] (l2bm1) [tube] {};
\node [fit=(l1bm3)(l1bm4)(l1bm5)] (l2bm2) [tube] {};
\node [fit=(l1bm6)] (l2bm3) [tube] {};

\node (l1b1) at (0,-2.5*\dy) {$\bullet$};
\node (l1b2) at (\dx,-2.5*\dy) {$\bullet$};
\node (l1b3) at (9.5*\dx,-2.5*\dy) {$\bullet$};
\node [fit=(l1b1)(l1b2)] (l2b1) [tube] {};
\node [fit=(l1b3)] (l2b2) [tube] {};

\draw[rounded corners=10] (l2t1.west) |-|[ratio=0.2] (l2tm1.west);
\draw[rounded corners=10] (l2t1.east) |-|[ratio=0.2] (l2tm3.east);
\draw[rounded corners] (l2tm1.east) r-ud[ud distance=35] (l2tm3.west);
\draw (l2tm2.west) to[out=90,in=90,distance=40] (l2tm2.east);
\draw (l2bm1.west) -- (l2b1.west);
\draw[rounded corners] (l2bm1.east) r-du[du distance=35] (l2bm3.west);
\draw[rounded corners] (l2b1.east) r-ud[ud distance=20] (l2b2.west);
\draw (l2bm3.east) -- (l2b2.east);
\draw (l2bm2.west) to[out=-90,in=-90,distance=40] (l2bm2.east);

\draw (l1tm3.center) r-ud[ud distance=20] (l1tm5.center);
\node (x) at ([shift={(0,0.2*\dy)}]l1tm4) {};
\draw (l1tm4.center) -- (x.center);
\draw (x.west) -- (x.east);
\draw (l1bm3.center) r-du[du distance=20] (l1bm5.center);
\node (y) at ([shift={(0,-0.2*\dy)}]l1bm4) {};
\draw (l1bm4.center) -- (y.center);
\draw (y.west) -- (y.east);
\draw (l1t1.center) |-|[ratio=0.35] (l1tm1.center);
\draw (l1t1.center) |-|[ratio=0.35] (l1tm6.center);
\draw (l1bm1.center) -- (l1b1.center);
\draw (l1bm1.center) |-|[ratio=0.6] (l1b3.center);
\node (t1) at ([shift={(0,0.2*\dy)}]l1tm2) {};
\node (t2) at ([shift={(0,-0.2*\dy)}]l1bm2) {};
\node (t3) at ([shift={(0,-0.2*\dy)}]l1bm6) {};
\node (t4) at ([shift={(0,0.2*\dy)}]l1b2) {};
\draw (t1.west) -- (t1.east);
\draw (l1tm2.center) -- (t1.center);
\draw (t2.west) -- (t2.east);
\draw (l1bm2.center) -- (t2.center);
\draw (t3.west) -- (t3.east);
\draw (l1bm6.center) -- (t3.center);
\draw (t4.west) -- (t4.east);
\draw (l1b2.center) -- (t4.center);

\draw[dashed] (l2tm1.west) -- (l2bm1.west);
\draw[dashed] (l2tm1.east) -- (l2bm1.east);
\draw[dashed] (l2tm2.west) -- (l2bm2.west);
\draw[dashed] (l2tm2.east) -- (l2bm2.east);
\draw[dashed] (l2tm3.west) -- (l2bm3.west);
\draw[dashed] (l2tm3.east) -- (l2bm3.east);

\draw[dotted] (l1tm1.center) -- (l1bm1.center);
\draw[dotted] (l1tm2.center) -- (l1bm2.center);
\draw[dotted] (l1tm3.center) -- (l1bm3.center);
\draw[dotted] (l1tm4.center) -- (l1bm4.center);
\draw[dotted] (l1tm5.center) -- (l1bm5.center);
\draw[dotted] (l1tm6.center) -- (l1bm6.center);
\end{scope}

\begin{scope}[xshift=8cm,yshift=-2cm]
\node (rhocircpi) at (8*\dx,-0.5*\dy) {$= \rho\circ\pi$};

\node (l1t1) at (2.5*\dx,0) {$\bullet$};
\node [fit=(l1t1)] (l2t1) [tube] {};

\node (arrow) at (-3*\dx, -0.5*\dy) {$\mapsto$};

\node (l1b1) at (0,-\dy) {$\bullet$};
\node (l1b2) at (\dx,-\dy) {$\bullet$};
\node (l1b3) at (4.5*\dx,-\dy) {$\bullet$};
\node [fit=(l1b1)(l1b2)] (l2b1) [tube] {};
\node [fit=(l1b3)] (l2b2) [tube] {};

\draw[rounded corners=10] (l2t1.west) |-|[ratio=0.27] (l2b1.west);
\draw[rounded corners=10] (l2t1.east) |-|[ratio=0.27] (l2b2.east);
\draw[rounded corners] (l2b1.east) r-ud[ud distance=20] (l2b2.west);
\draw (l1t1.center) |-|[ratio=0.5] (l1b1.center);
\draw (l1t1.center) |-|[ratio=0.5] (l1b3.center);
\node (t1) at ([shift={(0,0.2*\dy)}]l1b2) {};
\draw (t1.west) -- (t1.east);
\draw (l1b2.center) -- (t1.center);
\end{scope}
\end{tikzpicture}

}
\end{center}
The resulting diagram represents the morphism $\rho\circ\pi\in\CC_2((1;1),(2;2,1))$. 
\end{example}

\begin{notation}\label{note: deletedblocks}
	We denote the number of distinct blocks deleted in a composition $\rho\circ\pi$ by $rb(\rho,\pi)$. For example, in the composition illustrated in the example above, there are four removed blocks---three from the inner partition and one from the outer partition---and so we have $rb(\rho,\pi) = 4$. 
\end{notation}

Finally, as in $\NC$, any morphism $\rho\in\Hom(\CC_k)$ has an adjoint $\rho^*$ obtained by vertical reflection. The visual representation of $\rho^*$ is of course then just the visual representation of $\rho$ upside down.

\subsubsection{Monoidal structure}

Given objects $\gamma = (m;\alpha_1,\dots,\alpha_m)$ and $\delta=(n;\beta_1,\dots,\beta_n)$ in $\CC_k$, the tensor product is given by
\[
\gamma \otimes \delta = (m+n;\alpha_1,\dots,\alpha_m,\beta_1,\dots,\beta_n).
\]
If $\rho = (p;\phi_1,\dots,\phi_l)$ and $\pi = (q;\psi_1,\dots,\psi_{l'})$ are morphisms in $\CC_k$ the tensor product is given by
\[
 \rho\otimes\pi = (p\otimes q;\phi_1,\dots,\phi_l,\psi_1,\dots,\psi_{l'}).
\]
Graphically, the tensor product on both objects and morphisms is just horizontal juxtaposition, analogous to the case for $\NC$.

\subsection{The category of tubular partitions}

\begin{definition}\label{def: functorsIk}
	For each $k$ we inductively define functors $I_k : \CC_k \to \CC_{k+1}$. Given $m\in\CC_1$ we define $I_1(m) := (m;0,\dots,0)\in\CC_2$, and given a morphism $p\in\CC_1(m,n)$ we define $I_1(p) := (p;\id_0,\dots,\id_0) \in \CC_2(I_1(m),I_1(n))$. For $k\geq 2$ and $\gamma = (m;\alpha_1,\dots,\alpha_m) \in \CC_k$ we define
\[
 I_k(\gamma) := (m;I_{k-1}(\alpha_1),\dots,I_{k-1}(\alpha_m)) \in \CC_{k+1},
\]
and given a morphism $\rho = (p;\phi_1,\dots,\phi_l) \in \CC_k(\gamma,\delta)$ we define
\[
 I_k(\rho) := (p;I_{k-1}(\phi_1),\dots,I_{k-1}(\phi_l)) \in \CC_{k+1}(I_k(\gamma),I_k(\delta)).
\]
\end{definition}

\begin{definition}
The category of tubular partitions $\TT$ is the union
\[
 \TT = \bigcup_{k=1}^\infty \CC_k,
\]
where we identify each $\CC_k$ with its image $I_k(\CC_k) \subset \CC_{k+1}$. The monoidal structure is inherited from each subcategory $\CC_k$ and the unit object is given by the image of $0\in\CC_1$.
\end{definition}

A key property of $\TT$ is the existence of an endofunctor $\Psi$, which can be interpreted visually as taking a morphism and putting it inside a cylindrical tube. This visual interpretation is the inspiration for the name \emph{tubular partitions} given to morphisms in $\TT$. Mathematically, the existence of this endofunctor is the manifestation of the self-similarity of the compact quantum group $\A_X$ obtained after applying Tannaka--Krein duality (see also the discussion before Proposition~\ref{prop:psiobj}).

\begin{prop}
There is an endofunctor $\Psi \in \End(\TT)$ satisfying
\[
\Psi(\alpha) = (1;\alpha)
\]
for $\alpha\in \CC_k$, and
\[
 \Psi(\varphi) = (\idpart ;\varphi)
\]
for $\varphi\in \CC_k(\alpha,\beta)$.
\end{prop}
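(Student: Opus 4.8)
The plan is to define $\Psi$ one level at a time and then check that these level maps glue to a single endofunctor of $\TT$. For each $k\ge 1$ let $\Psi_k\colon\CC_k\to\CC_{k+1}$ be the assignment $\alpha\mapsto(1;\alpha)$ on objects and $\varphi\mapsto(\idpart;\varphi)$ on morphisms. First I would check that $\Psi_k$ lands where claimed: $(1;\alpha)$ is a legitimate object of $\CC_{k+1}$ (it is the tuple with $m=1$ and single entry $\alpha\in\CC_k$), and since $\idpart=\id_1\in\NC(1,1)$ has the single block $B_1=\{1,2\}$ with $U_{B_1}=\{1\}$ and $L_{B_1}=\{2\}$, a morphism $\varphi\in\CC_k(\alpha,\beta)$ is exactly the data required to fill that block. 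Hence $(\idpart;\varphi)\in\CC_{k+1}((1;\alpha),(1;\beta))$, so $\Psi_k$ sends $\CC_k(\alpha,\beta)$ into $\CC_{k+1}(\Psi_k(\alpha),\Psi_k(\beta))$ and is compatible with domains and codomains.

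Next I would verify the functor axioms for $\Psi_k$. Preservation of identities is immediate, since the identity of $(1;\alpha)$ in $\CC_{k+1}$ is $(\id_1;\id_\alpha)=(\idpart;\id_\alpha)=\Psi_k(\id_\alpha)$. The substantive point is preservation of composition. Given $\psi\in\CC_k(\alpha,\beta)$ and $\varphi\in\CC_k(\beta,\gamma)$, I would compute $\Psi_k(\varphi)\circ\Psi_k(\psi)=(\idpart;\varphi)\circ(\idpart;\psi)$ directly from the definition of composition in $\CC_{k+1}$. The outer partitions compose as $\idpart\circ\idpart=\idpart$ in $\NC$, and this composition deletes no block, since the single block of $\idpart$ remains connected to both the top and bottom openings. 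Consequently there is exactly one outer tube, inside which the inner morphisms $\varphi$ and $\psi$ are stacked and composed in $\CC_k$; the blocks removed in this inner composition are precisely those of the $\CC_k$-composition $\varphi\circ\psi$, so the inner result is genuinely $\varphi\circ\psi$. Thus $(\idpart;\varphi)\circ(\idpart;\psi)=(\idpart;\varphi\circ\psi)=\Psi_k(\varphi\circ\psi)$, as required.

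Finally, to obtain a genuine endofunctor of $\TT=\bigcup_k\CC_k$ I would check that the family $\{\Psi_k\}$ respects the identifications made by the embedding functors $I_k$ of Definition~\ref{def: functorsIk}; that is, $I_{k+1}\circ\Psi_k=\Psi_{k+1}\circ I_k$ on both objects and morphisms. Unwinding the definitions, for $\alpha\in\CC_k$ both sides equal $(1;I_k(\alpha))$, and for a morphism $\varphi$ both sides equal $(\idpart;I_k(\varphi))$; the base case $k=1$ works identically once one writes $I_1(m)=(m;0,\dots,0)$. This commuting square is exactly the statement that $\Psi_k$ and $\Psi_{k+1}$ agree on the image of $\CC_k$ inside $\CC_{k+1}$, so the $\Psi_k$ assemble into a well-defined functor $\Psi\colon\TT\to\TT$ with the stated values on objects and morphisms.

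I would expect the only place demanding care to be the composition step: one must unwind the recursive (graphical) definition of composition in $\CC_{k+1}$ and confirm that wrapping morphisms in a single identity tube neither creates nor deletes any outer block, so that the composite is controlled entirely by the inner composition in $\CC_k$. Everything else is a direct bookkeeping check against the definitions of $\CC_{k+1}$ and of the embeddings $I_k$.
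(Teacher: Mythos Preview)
Your proof is correct and follows essentially the same approach as the paper: verify compatibility with the embeddings $I_k$ to get well-definedness on $\TT$, then check preservation of identities and composition via $(\idpart;\varphi)\circ(\idpart;\psi)=(\idpart\circ\idpart;\varphi\circ\psi)=(\idpart;\varphi\circ\psi)$. Your write-up is in fact more careful than the paper's in spelling out why $(\idpart;\varphi)$ is a legitimate morphism of $\CC_{k+1}$ and why no outer block is deleted in the composition.
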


\begin{proof}

We begin by showing that $\Psi$ is well-defined. Fix $\alpha\in \CC_k$. Then
\[
 I_{k+1}(\Psi(\alpha)) = I_{k+1}((1;\alpha)) = (1;I_k(\alpha)) = \Psi(I_k(\alpha)).
\]
Likewise, if $\varphi\in\CC_k(\alpha,\beta)$ then
\[
 I_{k+1}(\Psi(\varphi)) = I_{k+1}((\idpart;\varphi)) = (\idpart;I_k(\varphi)) = \Psi(I_k(\varphi))
\]
and so $\Psi$ respects the identification of $\CC_k$ and $I_k(\CC_k)) \subset \CC_{k+1}$, and is hence well-defined on $\TT$.

We have
\[
 \Psi(\id_\alpha) = (\idpart;\id_\alpha) = (\id_1;\id_\alpha) = \id_{(1;\alpha)} = \id_{\Psi(\alpha)},
\]
and given composable morphisms $\varphi, \pi \in \CC_k$ we have
\begin{align*}
 \Psi(\varphi\circ\pi) &= (\idpart;\varphi\circ\pi) \\
 & = (\idpart\circ\idpart;\varphi\circ\pi) \\
 & = (\idpart;\varphi)\circ(\idpart;\pi) \\
 & = \Psi(\varphi)\circ\Psi(\pi)
\end{align*}
as required.
\end{proof}

We represent the functor $\Psi$ graphically as follows: given $\alpha \in \CC_k$ we draw $\Psi(\alpha)$ as
\begin{center}
\begin{tikzpicture}
\def\dy{1.5}
\node [tube,minimum width=2cm] at (0,\dy) {$\alpha$};
\end{tikzpicture}
\end{center}
and if $\varphi\in\CC_k(\alpha,\beta)$ then $\Psi(\phi)$ will be drawn as
\begin{center}
	\begin{tikzpicture}
		\def\dy{1.5}
		\node (l2t1) [tube,minimum width=2cm] at (0,\dy) {$\alpha$};
		\node (l2b1) [tube,minimum width=2cm] at (0,0) {$\beta$};
		\node (phi) at (0,0.5*\dy) {$\varphi$};
		\draw (l2t1.west) to[out=-90,in=90] (l2b1.west);
		\draw (l2t1.east) to[out=-90,in=90] (l2b1.east);
	\end{tikzpicture}
\end{center}

Compare $\Psi$ with the category embedding functors $I_k$: for example, given the object $3\in\CC_1=\NC$ we have $I_1(3) = (3;0,0,0)$ which graphically looks like
\begin{center}
\begin{tikzpicture}
\def\dx{1}
\node (t1) at (0,0) {$\bullet$};
\node (t2) at (\dx,0) {$\bullet$};
\node (t3) at (2*\dx,0) {$\bullet$};

\node (mapsto) at (3.2*\dx,0) {$\xmapsto{\mbox{ \ $I_1$ \ }}$};

\node[tube] (i1) at (5*\dx,0) {};
\node[tube] (i2) at (7*\dx,0) {};
\node[tube] (i3) at (9*\dx,0) {};
\end{tikzpicture}
\end{center}
while $\Psi(3) = (1;(3))$ which is graphically represented by
\begin{center}
\begin{tikzpicture}
\def\dx{1}
\node (t1) at (0,0) {$\bullet$};
\node (t2) at (\dx,0) {$\bullet$};
\node (t3) at (2*\dx,0) {$\bullet$};

\node (mapsto) at (3.3*\dx,0)  {$\xmapsto{\mbox{ \ $\Psi$ \ }}$};

\node (i1) at (5*\dx,0) {$\bullet$};
\node (i2) at (5.5*\dx,0) {$\bullet$};
\node (i3) at (6*\dx,0) {$\bullet$};
\node [fit=(i1)(i2)(i3)] (l1t1) [tube] {};
\end{tikzpicture}
\end{center}

\begin{remark}
The endofunctor $\Psi\in\End(\TT)$ is \textit{not} a monoidal functor. However, there is a canonical morphism $(\pants;\id_{\alpha\otimes\beta})$ from $\Psi(\alpha\otimes\beta)$ to $\Psi(\alpha)\otimes\Psi(\beta)$ for any $\alpha,\beta\in\TT$. This can be extended to any finite tensor product of objects in $\TT$.
\end{remark}

\begin{definition}
Given $\alpha_1, \dots, \alpha_n \in \CC_k$, define
\[
P_{\alpha_1,\dots,\alpha_n} := (p;(\id_{\alpha_1\otimes\dots\otimes\alpha_n})) \in \CC_k(\Psi(\alpha_1\otimes\dots\otimes\alpha_n),\Psi(\alpha_1)\otimes\dots\otimes\Psi(\alpha_n))
\]
where $p\in\NC(1,n)$ is the unique one-block partition on one upper and $n$ lower points.
\end{definition}
The morphism $P_{\alpha_1,\dots,\alpha_n}$ is represented by the diagram
\begin{center}
\begin{tikzpicture}
\def\dx{1.8}
\def\dy{2}
\node (l2t1) [tube] at (1.5*\dx,\dy) {$\alpha_1\otimes\alpha_2\otimes\dots\otimes\alpha_n$};
\node (l2b1) [tube] at (0,0) {$\alpha_1$};
\node (l2b2) [tube] at (\dx,0) {$\alpha_2$};
\node (dots) at (2*\dx,0) {$\dots$};
\node (l2bn) [tube] at (3*\dx,0) {$\alpha_n$};
\draw (l2t1.west) to[out=-90,in=90] (l2b1.west);
\draw (l2t1.east) to[out=-90,in=90] (l2bn.east);
\draw[rounded corners] (l2b1.east) r-ud[ud distance=20] (l2b2.west);
\draw[rounded corners] (l2b2.east) r-ud[ud distance=20] (dots.west);
\draw[rounded corners] (dots.east) r-ud[ud distance=20] (l2bn.west);
\end{tikzpicture}
\end{center}

The following lemma will be needed in the proof of our main result Theorem~\ref{thm:TannakaKrein}. We start with some notation.

\begin{notation}\label{note:Psiof1}
	For each $n\in\mathbb{N}$ we write $\psi_n:=\Psi^n(1)$, where $1\in\NC$. 
\end{notation} 
\begin{center}
\begin{tikzpicture}[every node/.style={inner sep =1.5pt}]
\def\dx{4}
\node (l11) at (0,0) {$\bullet$};
\node (l12) at (0.9*\dx,0) {$\bullet$};
\node (l13) at (2*\dx,0) {$\bullet$};
\node [fit=(l12)] (l21) [tube] {};
\node [fit=(l13)] (l22) [tube] {};
\node [fit=(l22)] (l31) [tube] {};

\node (psi0) at (-0.2*\dx,0) {$\psi_0=$};
\node (psi1) at (0.55*\dx,0) {$\psi_1=$};
\node (psi2) at (1.55*\dx,0) {$\psi_2=$};
\end{tikzpicture}
\end{center}

\begin{lemma}\label{lem:decomposingobjects}
Let $\alpha\in\mathcal{C}_k$. Then there exists $m\ge 1$, $\ell_1,\dots,\ell_m\in\{0,\dots,k-1\}$, and $S_\alpha\in\mathcal{T}(\alpha,\otimes_{j=1}^m\psi_{\ell_j})$ with $S_\alpha^*\circ S_\alpha=\id_\alpha$. 
\end{lemma}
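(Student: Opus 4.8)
The plan is to induct on $k$. The structural fact that drives everything is that any object $\alpha=(m;\alpha_1,\dots,\alpha_m)\in\CC_k$ factors as a tensor product of single tubes,
\[
\alpha=(m;\alpha_1,\dots,\alpha_m)=(1;\alpha_1)\otimes\cdots\otimes(1;\alpha_m)=\Psi(\alpha_1)\otimes\cdots\otimes\Psi(\alpha_m),
\]
which is immediate from the tensor-product formula $(m;\dots)\otimes(n;\dots)=(m+n;\dots)$ together with $\Psi(\beta)=(1;\beta)$. Thus constructing an isometry out of $\alpha$ reduces to constructing one out of each $\Psi(\alpha_i)$ and tensoring, provided I can push the inductive isometries for $\alpha_i\in\CC_{k-1}$ through $\Psi$ and then re-express the tube-of-a-tensor as a tensor-of-tubes.

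For the base case $k=1$ an object is an integer $n\in\NC$. When $n\ge 1$ we have $n=\psi_0^{\otimes n}$ because $\psi_0=1$, so $S_\alpha=\id_n$ works with every $\ell_j=0$. For the unit object $n=0$ I instead take the normalised cup $S_0=|X|^{-1/2}\,\paircap\colon 0\to\psi_0\otimes\psi_0$, which satisfies $S_0^*\circ S_0=\id_0$ because $\paircup\circ\paircap=|X|\,\id_0$ in the $C^*$-tensor category determined by $\TT$ (a single closed loop). This is the only place a scalar normalisation is required.

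For the inductive step I would first record two facts. First, $\Psi$ is a linear $*$-functor: it is a functor by the preceding Proposition, and it preserves adjoints since $(\idpart;\varphi)^*=(\idpart;\varphi^*)$, whence $\Psi$ sends isometries to isometries, as $\Psi(S)^*\circ\Psi(S)=\Psi(S^*\circ S)=\Psi(\id)=\id$. Second, each canonical morphism $P_{\beta_1,\dots,\beta_r}$ is an isometry: writing $P=(p;\id)$ with $p\in\NC(1,r)$ the one-block partition, the composite $P^*\circ P=(p^*\circ p;\id)$ is the identity, because stacking $p$ on $p^*$ yields a single block joining the one top and one bottom point with no closed loop, so $p^*\circ p=\idpart$. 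Granting these, for $\alpha=(m;\alpha_1,\dots,\alpha_m)$ with $m\ge1$ I take the inductive isometries $S_{\alpha_i}\colon\alpha_i\to\bigotimes_{j=1}^{m_i}\psi_{\ell^{(i)}_j}$ with $\ell^{(i)}_j\in\{0,\dots,k-2\}$, and set
\[
T_i:=P_{\psi_{\ell^{(i)}_1},\dots,\psi_{\ell^{(i)}_{m_i}}}\circ\Psi(S_{\alpha_i})\colon \Psi(\alpha_i)\longrightarrow \bigotimes_{j=1}^{m_i}\Psi(\psi_{\ell^{(i)}_j})=\bigotimes_{j=1}^{m_i}\psi_{\ell^{(i)}_j+1},
\]
using $\Psi(\psi_\ell)=\psi_{\ell+1}$. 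Each $T_i$ is an isometry as a composite of isometries, and then $S_\alpha:=T_1\otimes\cdots\otimes T_m$ is an isometry because the tensor product of isometries is an isometry in a $C^*$-tensor category. Its codomain is $\bigotimes_{i,j}\psi_{\ell^{(i)}_j+1}$, and since $\ell^{(i)}_j+1\in\{1,\dots,k-1\}\subseteq\{0,\dots,k-1\}$ with total factor count $\sum_i m_i\ge1$, this is exactly the required $S_\alpha$. The remaining case $m=0$ is the unit object $(0;)$, which under the identifications coincides with $0\in\CC_1$ and is covered by the base case.

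The only genuine computations are the two facts above: compatibility of $\Psi$ with adjoints, and $P^*\circ P=\id$. The step I expect to need the most care is the latter—verifying that composing the one-block partition with its adjoint deletes no block, so that no loop factor $|X|$ is produced and $P$ is a true isometry rather than merely a scalar multiple of one. Everything else is index bookkeeping for the $\ell^{(i)}_j$ together with repeated use of the closure of isometries under composition and tensor product.
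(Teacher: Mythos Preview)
Your argument is the paper's: induct on $k$, take $S_n=\id_n$ for $n\in\CC_1$, and in the inductive step set $S_\gamma=\bigotimes_i\bigl(P_{\psi_{\ell^{(i)}_1},\dots,\psi_{\ell^{(i)}_{m_i}}}\circ\Psi(S_{\alpha_i})\bigr)$, using that $\Psi$ preserves adjoints and that $P^*\circ P=\id$. One minor correction on the unit-object case: composition in $\TT$ itself deletes closed blocks \emph{without} any $|X|$ factor (that factor arises only for the linear maps $T_\rho$ via Lemma~\ref{lem: propsofTs}), so $\paircup\circ\paircap=\id_0$ already holds in $\TT$ and no normalisation is needed---you may simply take $S_0=\cappart\in\TT(0,\psi_0)$ (or $\paircap$) as a genuine morphism of $\TT$.
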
 

\begin{remark}\label{rem: picfordecomposingobjects}
	Before proving this lemma we illustrate the result with $\alpha=(1;(3;1,2,0)) $. The following figure represents a morphism $S\in\TT(\alpha,\psi_2\otimes\psi_2\otimes\psi_2\otimes\psi_1)$ satisfying $S^*\circ S=\id_\alpha$:
	\begin{center}
	\scalebox{0.8}{
		\begin{tikzpicture}[every node/.style={inner sep =1.5pt}]
			\def\dx{0.55}
			\def\tx{1.17}
			\def\dy{4}
			
			\node (l1t1) at (-4.5*\dx*\tx,0) {$\bullet$};
			\node (l1t2) at (-0.5*\dx*\tx,0) {$\bullet$};
			\node (l1t3) at (0.5*\dx*\tx,0) {$\bullet$};
			\node (l1tphantom) at (4.5*\dx*\tx,0) {\textcolor{white}{$\bullet$}};
			\node [fit=(l1t1)] (l2t1) [tube] {};
			\node [fit=(l1t2)(l1t3)] (l2t2) [tube] {};
			\node [fit=(l1tphantom)] (l2t3) [tube] {};
			\node [fit=(l2t1)(l2t2)(l2t3)] (l3t1) [tube] {};
			\node (l1b1) at (-7.5*\dx,-\dy) {$\bullet$};
			\node (l1b2) at (-2.5*\dx,-\dy) {$\bullet$};
			\node (l1b3) at (2.5*\dx,-\dy) {$\bullet$};
			\node (l1bphantom) at (7.5*\dx,-\dy) {\textcolor{white}{$\bullet$}};
			\node [fit=(l1b1)] (l2b1) [tube] {};
			\node [fit=(l1b2)] (l2b2) [tube] {};
			\node [fit=(l1b3)] (l2b3) [tube] {};
			\node [fit=(l1bphantom)] (l2b4) [tube] {};
			\node [fit=(l2b1)] (l3b1) [tube] {};
			\node [fit=(l2b2)] (l3b2) [tube] {};
			\node [fit=(l2b3)] (l3b3) [tube] {};
			\node [fit=(l2b4)] (l3b4) [tube] {};
			
			\draw[rounded corners=10] (l2t1.west) |-|[ratio=0.27] (l2b1.west);
			\draw[rounded corners=10] (l2t1.east) |-|[ratio=0.5] (l2b1.east);
			\draw[rounded corners=10] (l2t2.west) |-|[ratio=0.57] (l2b2.west);
			\draw[rounded corners=10] (l2t2.east) |-|[ratio=0.57] (l2b3.east);
			\draw[rounded corners=10] (l2t3.west) |-|[ratio=0.5] (l2b4.west);
			\draw[rounded corners=10] (l2t3.east) |-|[ratio=0.27] (l2b4.east);
			\draw[rounded corners=5] (l2b2.east) r-ud[ud distance=30] (l2b3.west);

			\draw (l3t1.west) to[out=-90,in=90] (l3b1.west);
			\draw (l3t1.east) to[out=-90,in=90] (l3b4.east);
			\draw[rounded corners=5] (l3b1.east) r-ud[ud distance=20] (l3b2.west);
			\draw[rounded corners=5] (l3b2.east) r-ud[ud distance=20] (l3b3.west);
			\draw[rounded corners=5] (l3b3.east) r-ud[ud distance=20] (l3b4.west);
			
			\draw (l1t1.center) |-|[ratio=0.385] (l1b1.center);
			\draw (l1t2.center) |-|[ratio=0.685] (l1b2.center);
			\draw (l1t3.center) |-|[ratio=0.685] (l1b3.center);

		\end{tikzpicture}}
	\end{center}
\end{remark}

\begin{proof}[Proof of Lemma~\ref{lem:decomposingobjects}]
	We prove this by induction on $k$. If $\alpha=n\in\mathcal{C}_1$, then $S_n:=\id_n\in\mathcal{T}(n,n)=\mathcal{T}(n,\psi_0^{\otimes n})$ satisfies $S_n^*\circ S_n=\id_n$. Assume the result holds for all $\alpha\in\mathcal{C}_k$, and let $\gamma:=(p;(\alpha_1,\dots,\alpha_p))\in\mathcal{C}_{k+1}$ with each $\alpha_i\in\mathcal{C}_k$. For each $1\le i\le p$ let $S_{\alpha_i}\in \mathcal{T}(\alpha_i,\otimes_{j=1}^{m_i}\psi_{\ell_{i,j}})$ where each $\ell_{i,j}\in\{0,\dots,k-1\}$. Then 
	\[
	S_\gamma:=\bigotimes_{i=1}^p\left(P_{\psi_{\ell_{i,1}},\dots,\psi_{\ell_{i,m_i}}}\circ\Psi(S_{\alpha_i})\right)
	\]
	satisfies $S_\gamma\in\TT(\gamma,\otimes_{i=1}^p\otimes_{j=1}^{m_i}\psi_{\ell_{i,j}})$, and 
	\begin{align*}
	S_{\gamma}^*\circ S_\gamma&=\left(\bigotimes_{i=1}^p\left(\Psi(S_{\alpha_i})^*\circ P_{\psi_{\ell_{i,1}},\dots,\psi_{\ell_{i,m_i}}}^*\right)\right)\circ \left(\bigotimes_{i=1}^p\left(P_{\psi_{\ell_{i,1}},\dots,\psi_{\ell_{i,m_i}}}\circ\Psi(S_{\alpha_i})\right)\right) \\
	&= \bigotimes_{i=1}^p\Psi(S_{\alpha_i}^*\circ S_{\alpha_i})\\
	&= \bigotimes_{i=1}^p \Psi(\id_{\alpha_i})\\
	&= \id_\gamma.
	\end{align*}
\end{proof}

\subsection{Rigidity} To see that the category $\TT$ is rigid, we argue by induction. We know $\CC_1 = \NC$ is rigid with self-dual objects; that is, $\overline{m}=m$ for all $m\in\NC$. Assume $\CC_k$ is rigid. Fix an object $\gamma = (m;\alpha_1,\dots,\alpha_m) \in \CC_{k+1}$. Then we define
\[
 \overline{\gamma} := (m;\overline{\alpha_m},\dots,\overline{\alpha_1}),
\]
which visually is the mirror image of $\gamma$. With morphisms given by
\[
 \eta_\gamma := (\eta_m;\eta_{\alpha_1},\dots,\eta_{\alpha_m}) \in \CC_{k+1}(0,\gamma\otimes\overline{\gamma}), \ \epsilon_\gamma := (\epsilon_m;\epsilon_{\alpha_m},\dots,\epsilon_{\alpha_1}) \in \CC_{k+1}(\overline{\gamma}\otimes\gamma,0),
\]
we have 
\[
(\id_\gamma\otimes\epsilon_\gamma)\circ (\eta_\gamma\otimes\id_\gamma)=\id_\gamma\text{ and }(\epsilon_\gamma\otimes\id_{\overline{\gamma}})\circ(\id_{\overline{\gamma}}\otimes \eta_\gamma)=\id_{\overline{\gamma}}.
\]
Hence $\CC_{k+1}$ is rigid. 

We illustrate the above identities with the following example, where $\gamma = \Psi(\alpha)$.

\begin{center}
	\begin{tikzpicture}
	\tikzset{tube/.style={draw, ellipse, minimum width=1pt}}
		\def\dy{1}
		\def\dx{1.5}
		\begin{scope}
		\node (l2t1) [tube] at (2*\dx,0) {$\alpha$};
		\node (l2m1) [tube] at (0,-\dy) {$\alpha$};
		\node (l2m2) [tube] at (\dx,-\dy) {$\overline{\alpha}$};
		\node (l2m3) [tube] at (2*\dx,-\dy) {$\alpha$};
		\node (l2b1) [tube] at (0,-2*\dy) {$\alpha$};
				
		\draw (l2t1.west) to[out=-90,in=90] (l2m3.west);
		\draw (l2t1.east) to[out=-90,in=90] (l2m3.east);
		\draw (l2m1.west) to[out=-90,in=90] (l2b1.west);
		\draw (l2m1.east) to[out=-90,in=90] (l2b1.east);
		\draw[rounded corners=20] (l2m1.west)  r-ud[ud distance=28](l2m2.east);
		\draw[rounded corners=10] (l2m1.east)  r-ud[ud distance=10](l2m2.west);
		\draw[rounded corners=20] (l2m2.west)  r-du[du distance=28](l2m3.east);
		\draw[rounded corners=10] (l2m2.east)  r-du[du distance=10](l2m3.west);
		\end{scope}
		
		\begin{scope}[xshift=5cm]
		\node (eq) at (-0.65*\dx,-\dy) {$=$};
		\node (l2t1) [tube] at (0,0) {$\alpha$};
		\node (l2b1) [tube] at (0,-2*\dy) {$\alpha$};
		\draw (l2t1.west) to[out=-90,in=90] (l2b1.west);
		\draw (l2t1.east) to[out=-90,in=90] (l2b1.east);
		\end{scope}
		
		\begin{scope}[xshift=8cm]
		\node (l2t1) [tube] at (0,0) {$\overline{\alpha}$};
		\node (l2m1) [tube] at (0,-\dy) {$\overline{\alpha}$};
		\node (l2m2) [tube] at (\dx,-\dy) {$\alpha$};
		\node (l2m3) [tube] at (2*\dx,-\dy) {$\overline{\alpha}$};
		\node (l2b1) [tube] at (2*\dx,-2*\dy) {$\overline{\alpha}$};
				
		\draw (l2t1.west) to[out=-90,in=90] (l2m1.west);
		\draw (l2t1.east) to[out=-90,in=90] (l2m1.east);
		\draw (l2m3.west) to[out=-90,in=90] (l2b1.west);
		\draw (l2m3.east) to[out=-90,in=90] (l2b1.east);
		\draw[rounded corners=20] (l2m1.west)  r-du[du distance=28](l2m2.east);
		\draw[rounded corners=10] (l2m1.east)  r-du[du distance=10](l2m2.west);
		\draw[rounded corners=20] (l2m2.west)  r-ud[ud distance=28](l2m3.east);
		\draw[rounded corners=10] (l2m2.east)  r-ud[ud distance=10](l2m3.west);
		\end{scope}
		
		\begin{scope}[xshift=13cm]
		\node (eq) at (-0.65*\dx,-\dy) {$=$};
		\node (l2t1) [tube] at (0,0) {$\overline{\alpha}$};
		\node (l2b1) [tube] at (0,-2*\dy) {$\overline{\alpha}$};
		\draw (l2t1.west) to[out=-90,in=90] (l2b1.west);
		\draw (l2t1.east) to[out=-90,in=90] (l2b1.east);
		\end{scope}
	\end{tikzpicture}
\end{center}

\subsection{Left and right rotation}

Let $\gamma = (m;\alpha_1,\dots,\alpha_m), \delta = (n;\beta_1,\dots,\beta_n) \in \TT$ and $\rho = (p;\phi_1,\dots,\phi_l) \in \TT(\gamma,\delta)$. Define the \emph{left rotation} of $\rho$ by
\[
 L(\rho) = (\id_{\Psi(\overline{\alpha_1})} \otimes \rho) \circ (\eta_{\Psi(\overline{\alpha_1})} \otimes \id_{\Psi(\alpha_2)}\otimes\dots\otimes\id_{\Psi(\alpha_m)})
\]
and \emph{right rotation} of $\rho$ by
\[
 R(\rho) = (\rho \otimes \id_{\Psi(\overline{\alpha_m})})\circ (\id_{\Psi(\alpha_1)}\otimes\dots\otimes\id_{\Psi(\alpha_{m-1})}\otimes  \eta_{\Psi(\alpha_m))})
\]
Visually, left (resp. right) rotations should be thought of as taking the top left (resp. right) outer-most ellipse and stretching and flipping the attached tube so that the opening is positioned at the bottom left (resp. right).

For example, if $\rho = (p;\varphi) \in \TT(\gamma,\delta)$, where $p \in \NC(m,n)$ consists of a single block, then left rotation of $\rho$ can be visualised as in the following diagram.
\begin{center}
\scalebox{1}{
\begin{tikzpicture}
\tikzset{tube/.style={draw, ellipse, minimum width=1pt}}
\begin{scope}
\def\dx{1.4}
\def\dy{1}

\node (l2t1) [tube] at (0,2) {$\alpha_1$};
\node (l2t2) [tube] at (\dx,2) {$\alpha_2$};
\node (tdots) at (2*\dx,2) {$\dots$};
\node (l2tm) [tube] at (3*\dx,2) {$\alpha_m$};
\node (l2b1) [tube] at (0,0) {$\beta_1$};
\node (l2b2) [tube] at (\dx,0) {$\beta_2$};
\node (bdots) at (2*\dx,0) {$\dots$};
\node (l2bn) [tube] at (3*\dx,0) {$\beta_n$};
\node (eq) at (4.1*\dx,0.5*\dy) {$\xmapsto{\mbox{ \ \ $L$ \ \ }}$};

\draw (l2t1.west) -- (l2b1.west);
\draw[rounded corners] (l2t1.east) r-du[ud distance=20] (l2t2.west);
\draw[rounded corners] (l2t2.east) r-du[ud distance=20] (tdots.west);
\draw[rounded corners] (tdots.east) r-du[ud distance=20] (l2tm.west);
\draw[rounded corners] (l2b1.east) r-ud[ud distance=20] (l2b2.west);
\draw[rounded corners] (l2b2.east) r-ud[ud distance=20] (bdots.west);
\draw[rounded corners] (bdots.east) r-ud[ud distance=20] (l2bn.west);
\draw (l2tm.east) -- (l2bn.east);
\end{scope}

\begin{scope}[xshift=8.6cm]
\def\dx{1.4}
\def\dy{1}

\node (l2t2) [tube] at (\dx,2) {$\alpha_2$};
\node (tdots) at (2*\dx,2) {$\dots$};
\node (l2tm) [tube] at (3*\dx,2) {$\alpha_m$};
\node (l2b0) [tube] at (-\dx,0) {$\overline{\alpha_1}$};
\node (l2b1) [tube] at (0,0) {$\beta_1$};
\node (l2b2) [tube] at (\dx,0) {$\beta_2$};
\node (bdots) at (2*\dx,0) {$\dots$};
\node (l2bn) [tube] at (3*\dx,0) {$\beta_n$};

\draw[rounded corners=10] (l2t2.west) |-|[ratio=0.27] (l2b0.west);
\draw[rounded corners] (l2b0.east) r-ud[ud distance=20] (l2b1.west);
\draw[rounded corners] (l2t2.east) r-du[ud distance=20] (tdots.west);
\draw[rounded corners] (tdots.east) r-du[ud distance=20] (l2tm.west);
\draw[rounded corners] (l2b1.east) r-ud[ud distance=20] (l2b2.west);
\draw[rounded corners] (l2b2.east) r-ud[ud distance=20] (bdots.west);
\draw[rounded corners] (bdots.east) r-ud[ud distance=20] (l2bn.west);
\draw (l2tm.east) -- (l2bn.east);
\end{scope}

\end{tikzpicture}
}
\end{center}

The flip explains why any object contained in the rotated opening is replaced by its dual object.

We can also rotate openings from bottom to top by $L(\rho^*)^*$ or $R(\rho^*)^*$. Visually, we are flipping the morphism vertically, left or right rotating, and then flipping vertically again. Note that any rotation can be reversed, since $L(L(\rho)^*)^* = \rho = R(R(\rho)^*)^*$.

\subsection{Generation of $\TT$} \label{subsec:generation}

In order to work with $\TT$ it will be useful to have a list of generators of each subcategory $\CC_k$. The category $\CC_1 = \NC$ is generated by $\{\idpart, \pants, \cappart\}$ and their adjoints, see for example \cite{BS2009}.

\begin{prop} For $k\geq 1$, $\CC_{k+1}$ is generated by
\begin{equation} \label{listofgenerators}
 \{\Psi(\varphi) : \varphi\in\Hom(\CC_k)\} \cup \{ P_{\alpha,\beta} : \alpha,\beta\in\CC_k \} \cup \{\cappart\},
\end{equation}
where we think of $\cappart$ as a morphism in $\CC_{k+1}$.
\end{prop}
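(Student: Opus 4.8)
The plan is to let $\GG\subseteq\CC_{k+1}$ be the subcategory generated by the set \eqref{listofgenerators}, i.e.\ the smallest collection of morphisms containing these three families together with all identities and closed under composition, tensor product, and adjoint, and then to prove $\GG=\CC_{k+1}$. First I would record the cheap membership facts. Since $\GG$ is adjoint-closed it contains $P_{\alpha,\beta}^*$ and $\cuppart=\cappart^*$, and since $\varphi$ ranges over all of $\Hom(\CC_k)$ it contains every $\Psi(\varphi)$, in particular $\Psi(\id_\alpha)=\id_{\Psi(\alpha)}$ and hence every identity $\id_\gamma=\Psi(\id_{\alpha_1})\otimes\dots\otimes\Psi(\id_{\alpha_m})$. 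Iterating $P_{\alpha,\beta}$ gives the $n$-fold splitter: $P_{\alpha_1,\dots,\alpha_n}=(P_{\alpha_1,\dots,\alpha_{n-1}}\otimes\id_{\Psi(\alpha_n)})\circ P_{\alpha_1\otimes\dots\otimes\alpha_{n-1},\alpha_n}\in\GG$, and likewise its adjoint. The upshot is that all the \emph{decorated elementary partitions} lie in $\GG$: one checks directly that $(\pants;\phi)=P_{\beta_1,\beta_2}\circ\Psi(\phi)$, that $(\shirt;\phi)=\Psi(\phi)\circ P_{\alpha_1,\alpha_2}^*$, and that the decorated caps and cups $(\cappart;\phi)$, $(\cuppart;\phi)$ are $\Psi(\phi)\circ(\cappart;\id)$ and $(\cuppart;\id)\circ\Psi(\phi)$ respectively, all composites of generators.

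The central step is a normal-form factorization of an arbitrary morphism $\rho=(p;\phi_1,\dots,\phi_l)\in\CC_{k+1}(\gamma,\delta)$, with $\gamma=(m;\alpha_1,\dots,\alpha_m)$, $\delta=(n;\beta_1,\dots,\beta_n)$, and $p=\{B_1,\dots,B_l\}$ ordered (say) by leftmost point. I would introduce the \emph{middle object} with one tube per block, $\mu=(l;\mu_1,\dots,\mu_l)$ where $\mu_i=\bigotimes_{j\in U_{B_i}}\alpha_j$, and write
\[
\rho \;=\; D\circ\Big(\bigotimes_{i=1}^{l}\Psi(\phi_i)\Big)\circ U,
\]
where $U\colon\gamma\to\mu$ is the ``fan-in'' upper half of $p$ carrying only identity inner data (it merges, for each block, the top tubes $U_{B_i}$ into the single middle tube $i$), and $D\colon(l;\nu_1,\dots,\nu_l)\to\delta$ with $\nu_i=\bigotimes_{j'\in L_{B_i}}\beta_{j'}$ is the ``fan-out'' lower half, again identity-decorated. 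The verification is combinatorial: contracting each block to its middle point shows $p_D\circ p_U=p$ with no deleted blocks, and the composition rule of $\CC_{k+1}$ composes inner morphisms blockwise, returning $\id_{\nu_i}\circ\phi_i\circ\id_{\mu_i}=\phi_i$ on $B_i$. The middle layer $\bigotimes_i\Psi(\phi_i)$ is a tensor of generators, so the problem reduces to showing $U,D\in\GG$; note both are \emph{identity-decorated} partitions, which typecheck precisely because for $U$ each block has a single lower (middle) point with content $\mu_i$, and dually for $D$. Empty $U_{B_i}$ (cap-type blocks) and empty $L_{B_i}$ (cup-type blocks) are absorbed here: such a block contributes $\mu_i=0$ or $\nu_i=0$ and a decorated cap/cup in $U$ or $D$.

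Finally I would prove that every identity-decorated partition lies in $\GG$. By the standard generation of $\NC$ (see \cite{BS2009}), the outer partitions $p_U$ and $p_D$ decompose as composites of tensor products of $\idpart,\pants,\shirt,\cappart,\cuppart$; one replaces each of these basic generators by its identity-decorated $\CC_{k+1}$-counterpart $\Psi(\id),\,P,\,P^*,\,\cappart,\,\cuppart$ (all in $\GG$ by the first paragraph) and assembles them by the very same composition/tensor pattern. I expect the main obstacle to be making this lift rigorous: one must choose a \emph{deletion-free} (monotone) layering of the $\NC$-decomposition so that, under the $\CC_{k+1}$ composition rule of Notation~\ref{note: deletedblocks}, no block is discarded and every inner morphism remains an identity, so that the reassembled composite is exactly $(p_U;\id)=U$ (and $(p_D;\id)=D$) rather than some partition differing by spurious merges, splits, or vanished blocks. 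The nesting of non-crossing blocks — which in the base case is produced by sandwiching a basic generator between identities and composing, e.g.\ realizing a nested pair via $\paircup\circ(\id\otimes\paircup\otimes\id)$ — is precisely what forces the use of composition (and hence $P$, $P^*$) rather than tensor alone, and carefully tracking that these intermediate splits re-merge with trivial inner data is the delicate bookkeeping. Once $U,D\in\GG$ is established, the factorization above gives $\rho\in\GG$ for arbitrary $\rho$, whence $\GG=\CC_{k+1}$, completing the proof.
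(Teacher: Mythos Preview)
Your normal-form factorization $\rho=D\circ\big(\bigotimes_i\Psi(\phi_i)\big)\circ U$ through a single middle object $\mu=(l;\mu_1,\dots,\mu_l)$ with one tube per block is where the argument breaks. The fan-in morphism $U$ you describe requires an outer partition $p_U\in\NC(m,l)$ whose $i$-th block is $U_{B_i}\cup\{\text{middle point }i\}$, and such a partition is \emph{not} non-crossing when the blocks of $p$ are nested. Concretely, take $p=\{\{1,4\},\{2,3\}\}\in\NC(4,0)$: you need $p_U\in\NC(4,2)$ merging $\{1,4\}$ into one lower point and $\{2,3\}$ into the other, but either assignment of lower points yields a crossing (e.g.\ $\{1,4,6\},\{2,3,5\}$ has $3<4<5<6$ alternating between blocks, and the other ordering fails symmetrically). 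So $U$ simply does not exist as a morphism of $\CC_{k+1}$; the obstruction is structural, not the deletion/bookkeeping issue you flag in your last paragraph.

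The paper handles exactly this nesting problem with \emph{rotations}: it first builds $\eta_{\Psi(\alpha)}=P_{\alpha,\overline{\alpha}}\circ\Psi(\eta_\alpha)\circ\cappart$ from the generators, so that left/right rotations are available inside $\GG$, and then observes that a sequence of rotations turns any $\rho\in\CC_{k+1}$ into a tensor product of single-block morphisms (in the example above, one left rotation sends $\{\{1,4\},\{2,3\}\}\in\NC(4,0)$ to $\{\{1,2\},\{3,4\}\}\in\NC(3,1)=\paircup\otimes\idpart$). Each single-block morphism is then $P_{\beta_1,\dots,\beta_n}\circ\Psi(\varphi)\circ P_{\alpha_1,\dots,\alpha_m}^*$, and since rotations are reversible within $\GG$, this suffices. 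Your factorization could be repaired by a recursive, nesting-respecting construction (peel off an outermost block, rotate or cup it away, recurse), but that is essentially the paper's rotation argument in disguise; the flat one-layer version you wrote does not go through.
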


\begin{proof} Fix $k\geq 1$. We begin by constructing the morphisms in $\CC_{k+1}$ necessary for left and right rotations; namely $\id_{\Psi(\alpha)}$ and $\eta_{\Psi(\alpha)}$ for $\alpha\in\CC_k$. We have $\id_{\Psi(\alpha)} = \Psi(\id_{\alpha})$ and $\id_\alpha\in\Hom(\CC_k)$. Then we have
\[
 \eta_{\Psi(\alpha)} = P_{\alpha,\overline{\alpha}} \circ \Psi(\eta_{\alpha}) \circ \cappart
\]
expressing $\eta_{\Psi(\alpha)}$ as a composition of morphisms in our generating set as required. This composition is shown in the following diagram:
\begin{center}
\begin{tikzpicture}
\tikzset{tube/.style={draw, ellipse, minimum width=1cm}}
\begin{scope}
\def\dx{1.4}
\def\dy{1}
\node (etapsi) at (-\dx,0.5*\dy) {$\eta_{\Psi(\alpha)} = $};
\node (etaalpha) at (0.5*\dx,\dy) {$\eta_\alpha$};
\node (l2b1) [tube] at (0,0) {$\alpha$};
\node (l2b2) [tube] at (\dx,0) {$\overline{\alpha}$};
\node (eq) at (2*\dx,0.5*\dy) {$=$};
\draw[rounded corners=10] (l2b1.west) r-ud[ud distance=40] (l2b2.east); 
\draw[rounded corners] (l2b1.east) r-ud[ud distance=20] (l2b2.west);
\end{scope}

\begin{scope}[xshift=4cm,yshift=-1cm]
\def\dx{1.8}
\def\dy{1.3}

\node (l2t1) [tube,minimum width=2cm] at (0.5*\dx,2*\dy) {$\mathbb{1}$};
\node (etaalpha) at (0.5*\dx,1.5*\dy) {$\eta_\alpha$};
\node (l2m1) [tube] at (0.5*\dx,\dy) {$\alpha\otimes\overline{\alpha}$};
\node (l2b1) [tube] at (0,0) {$\alpha$};
\node (l2b2) [tube] at (\dx,0) {$\overline{\alpha}$};

\draw (l2t1.west) to[out=90,in=90,distance=30] (l2t1.east);
\draw (l2t1.west) to[out=-90,in=90] (l2m1.west);
\draw (l2t1.east) to[out=-90,in=90] (l2m1.east); 
\draw (l2m1.west) to[out=-90,in=90] (l2b1.west);
\draw (l2m1.east) to[out=-90,in=90] (l2b2.east);
\draw[rounded corners] (l2b1.east) r-ud[ud distance=10] (l2b2.west);
\end{scope}
\end{tikzpicture}
\end{center}

Now, through a series of left and right rotations, any morphism in $\CC_{k+1}$ can be transformed into a tensor product of finitely many morphisms of the form $\rho=(p;\varphi)$ where $\varphi\in\CC_k$ and $p\in\NC$ consists of a single block. Since rotations are reversible, it's enough to construct morphisms of this form using the generators. In other words, it is enough to construct a morphism $\rho$ as shown in the diagram below
\begin{center}
\begin{tikzpicture}
\tikzset{tube/.style={draw, ellipse, minimum width=1cm}}
\def\dx{1.4}
\def\dy{1}
\node (rho) at (-0.8*\dx,\dy) {$\rho = $};
\node (phi) at (1.5*\dx,\dy) {$\varphi$};
\node (l2t1) [tube] at (0,2) {$\alpha_1$};
\node (l2t2) [tube] at (\dx,2) {$\alpha_2$};
\node (tdots) at (2*\dx,2) {$\dots$};
\node (l2tm) [tube] at (3*\dx,2) {$\alpha_m$};
\node (l2b1) [tube] at (0,0) {$\beta_1$};
\node (l2b2) [tube] at (\dx,0) {$\beta_2$};
\node (bdots) at (2*\dx,0) {$\dots$};
\node (l2bn) [tube] at (3*\dx,0) {$\beta_n$};

\draw (l2t1.west) -- (l2b1.west);
\draw[rounded corners] (l2t1.east) r-du[ud distance=20] (l2t2.west);
\draw[rounded corners] (l2t2.east) r-du[ud distance=20] (tdots.west);
\draw[rounded corners] (tdots.east) r-du[ud distance=20] (l2tm.west);
\draw[rounded corners] (l2b1.east) r-ud[ud distance=20] (l2b2.west);
\draw[rounded corners] (l2b2.east) r-ud[ud distance=20] (bdots.west);
\draw[rounded corners] (bdots.east) r-ud[ud distance=20] (l2bn.west);
\draw (l2tm.east) -- (l2bn.east);

\end{tikzpicture}
\end{center}
where $\varphi\in\CC_k(\alpha_1\otimes\dots\otimes\alpha_m,\beta_1\otimes\dots\otimes\beta_n)$. This follows from the equality
\[
 \rho = P_{\beta_1\otimes\dots\otimes\beta_n} \circ \Psi(\varphi) \circ P_{\alpha_1\otimes\dots\otimes\alpha_m}^*
\]
as illustrated below:
\begin{center}
\begin{tikzpicture}
\tikzset{tube/.style={draw, ellipse, minimum width=1cm}}
\def\dx{1.5}
\def\dy{1.3}

\node (l2t1) [tube] at (0,3*\dy) {$\alpha_1$};
\node (l2t2) [tube] at (\dx,3*\dy) {$\alpha_2$};
\node (tdots) at (2*\dx,3*\dy) {$\dots$};
\node (l2tm) [tube] at (3*\dx,3*\dy) {$\alpha_m$};
\node (phi) at (1.5*\dx,1.5*\dy) {$\varphi$};
\node (l2tm1) [tube] at (1.5*\dx,2*\dy) {$\alpha_1\otimes\alpha_2\otimes\dots\otimes\alpha_m$};
\node (l2bm1) [tube] at (1.5*\dx,\dy) {$\beta_1\otimes\beta_2\otimes\dots\otimes\beta_n$};
\node (l2b1) [tube] at (0,0) {$\beta_1$};
\node (l2b2) [tube] at (\dx,0) {$\beta_2$};
\node (bdots) at (2*\dx,0) {$\dots$};
\node (l2bn) [tube] at (3*\dx,0) {$\beta_n$};

\draw[rounded corners] (l2t1.east) r-du[ud distance=20] (l2t2.west);
\draw[rounded corners] (l2t2.east) r-du[ud distance=20] (tdots.west);
\draw[rounded corners] (tdots.east) r-du[ud distance=20] (l2tm.west);
\draw (l2t1.west) to[out=-90,in=90] (l2tm1.west);
\draw (l2tm.east) to[out=-90,in=90] (l2tm1.east);
\draw (l2tm1.west) to[out=-90,in=90] (l2bm1.west);
\draw (l2tm1.east) to[out=-90,in=90] (l2bm1.east);
\draw (l2bm1.west) to[out=-90,in=90] (l2b1.west);
\draw (l2bm1.east) to[out=-90,in=90] (l2bn.east);
\draw[rounded corners] (l2b1.east) r-ud[ud distance=20] (l2b2.west);
\draw[rounded corners] (l2b2.east) r-ud[ud distance=20] (bdots.west);
\draw[rounded corners] (bdots.east) r-ud[ud distance=20] (l2bn.west);
\end{tikzpicture}
\end{center}
\end{proof}

\subsection{$C^*$-tensor category}

To apply Tannaka-Krein duality we must first build a concrete rigid $C^*$-tensor category by associating Hilbert spaces and linear maps to the objects and morphisms of $\TT$. This construction is analogous to the definition of \emph{easy quantum groups} from partition categories, see \cite{BS2009}.

We begin by associating a Hilbert space $\HH_\alpha$ to each object $\alpha\in \TT$. We do this by defining an index set consisting of generalised words for an orthonormal basis of $\HH_\alpha$.

\subsubsection{Generalised words and Hilbert spaces}\label{sec:C*tensorcat} Fix a finite set $X$. Given an object $\alpha \in \TT$ we want to define a set $X^\alpha$, the elements of which we will refer to as \emph{generalised words of shape $\alpha$}. We do this inductively. If $m=0\in\CC_1$, then $X^m := \{\varnothing\}$. If $m \geq 1 \in \CC_1$ then
\[
 X^m := \{\mathbf{x} = x_1\cdots x_m : x_i \in X,\, 1\leq i \leq m\}
\]
is the usual set of words in $X$ of length $m$. For $u \in X^m$ we write $|u|:=m$. Then given an object $\gamma = (m;\alpha_1,\dots,\alpha_m) \in \CC_{k+1} \subset \TT$, inductively define
\[
 X^\gamma := \prod_{i=1}^m (X \times X^{\alpha_i}) = \{ v = (x_1,u_1)\cdots(x_m,u_m) : x_i\in X, \,u_i \in X^{\alpha_i}, \,1\leq i \leq m \}.
\]
We refer to an element of $X^\gamma$ as a \emph{generalised word of shape $\gamma$}, and for $v\in X^\gamma$ we write $|v| := \gamma$. Notice that there is a canonical bijection between $X^{\gamma\otimes\delta}$ and $X^\gamma \times X^\delta$. We now use these generalised words to associate Hilbert spaces to each object in $\TT$. 

\begin{definition}\label{def:Halpha}
If $\alpha \in \CC_k$ define the finite-dimensional Hilbert space $\HH_\alpha := \C^{X^\alpha}$ with canonical orthonormal spanning set $\{e_v : v\in X^\alpha\}$. Notice that $\HH_0 = \C$, and this assignment of Hilbert space respects the monoidal structure of the categories $\CC_k$ in the sense that $\HH_{\alpha\otimes\beta}$ is canonically isomorphic to $\HH_\alpha\otimes\HH_\beta$.
\end{definition}

\subsubsection{Linear maps}

We now want to associate a linear map $T_\rho : \HH_{\operatorname{dom}(\rho)} \to \HH_{\operatorname{cod}(\rho)}$ to each morphism $\rho$ in $\TT$. We do this with the aid of a function $\delta_\rho:X^{\operatorname{dom}(\rho)}\times X^{\operatorname{cod}(\rho)}\to \{0,1\}$, which we define inductively.  

Fix a morphism $p\in\NC(m,n)$. Let $\mathbf{x}=x_1\dots x_m \in X^m$ and $\mathbf{y}=y_1 \dots y_n \in X^n$ and label the top $m$ and bottom $n$ points in $p$ with $\mathbf{x}$ and $\mathbf{y}$, respectively, where the top and bottom points are both labelled left to right (in contrast to the labelling convention given in section \ref{subsec:catstructure}). Then define $\delta_p$ by $\delta_p(\mathbf{x},\mathbf{y}):=1$ if for every block in $p$ all points carry the same label, and otherwise $\delta_p(\mathbf{x},\mathbf{y}):=0$. 

We now assume that $\delta_\phi$ is defined for any morphism $\phi\in\Hom(\CC_k)$ for some $k\geq 1$. Let $\gamma=(m;\alpha_1,\dots,\alpha_m), \zeta=(n;\beta_1,\dots,\beta_n) \in \CC_{k+1}$ and fix $\rho = (p;\phi_1,\dots,\phi_l) \in \CC_{k+1}(\gamma,\zeta)$. Let $v = (x_1,u_1)\cdots(x_m,u_m)$ and $w=(y_1,z_1)\cdots(y_n,z_n)$ be generalised words of shape $\gamma$ and $\zeta$, respectively. Write $\mathbf{x} = x_1\cdots x_m, \mathbf{y} = y_1\cdots y_n$, and consider the objects
\[
\alpha' = \bigotimes_{i=1}^m \alpha_i , \ \beta' = \bigotimes_{i=1}^n \beta_i \in \CC_k,
\]
and the morphism
\[
\phi' = \bigotimes_{i=1}^l \phi_i \in \CC_k(\alpha',\beta').
\]
Effectively we are ignoring the outer-most blocks of the morphism $\rho \in \CC_{k+1}$ and treating what is left as a morphism in $\CC_k$. The concatenations $\mathbf{u} = u_1\cdots u_m$ and $\mathbf{z} = z_1\cdots z_n$ can now be identified with generalised words of shapes $\alpha'$ and $\beta'$, respectively. We can now define
\[
\delta_\rho(v,w) := \delta_p(\mathbf{x},\mathbf{y}) \delta_{\phi'}(\mathbf{u},\mathbf{z}).
\]

\begin{example}
	Let $\alpha = (1;2) \in \CC_2$ and suppose $\rho\in\CC_2(\alpha,\alpha)$ is the morphism depicted below.
	\begin{center}
		\begin{tikzpicture}[scale=0.7]
			\def\dx{1}
			\def\dy{2.5}
			\node (l1t1) at (0,0) {$\bullet$};
			\node (l1t2) at (\dx,0) {$\bullet$};
			\node [fit=(l1t1)(l1t2)] (l2t1) [tube] {};
			
			\node (l1b1) at (0,-\dy) {$\bullet$};
			\node (l1b2) at (\dx,-\dy) {$\bullet$};
			\node [fit=(l1b1)(l1b2)] (l2b1) [tube] {};
			\node (x) at ([shift={(0,-0.4*\dy)}]l1t2) {};
			
			\draw (l2t1.west) to (l2b1.west);
			\draw (l2t1.east) to (l2b1.east);
			\draw (l1t1.center) |-|[ratio=0.6] (l1b1.center);
			\draw (l1t1.center) |-|[ratio=0.6] (l1b2.center);
			\draw (l1t2.center) to (x.center);
			\draw (x.west) to (x.east);
		\end{tikzpicture}
	\end{center}
A generalised word of shape $\alpha$ has the form $(x,y_1y_2)$ where $x\in X$ and $y_1y_2 \in X^2$. We can label $\alpha$ by this generalised word by assigning the $x$ to the outer ellipse and $y_1, y_2$ to the two inner points.

Applying the definition of $\delta_\rho$ would then say that for example, given distinct $x,y\in X$
\[
\delta_\rho((x,xx),(y,xx)) = 0
\]
because the outer ellipses are connected but carry different labels. Likewise,
\[
\delta_\rho((x,xy),(x,xy)) = 0
\]
because the inner block containing three points would have one point labelled differently to the other two (the bottom right in this case). However,
\[
\delta_\rho((x,yx),(x,yy)) = 1
\]
since all points and ellipses connnected in a block carry the same label.
\end{example}

We are now ready to define the linear maps. 

\begin{definition}\label{def:LinearMaps}
	For a morphism $\rho$ in $\TT$ we define $T_{\rho} : \HH_{\operatorname{dom}(\rho)} \to \HH_{\operatorname{cod}(\rho)}$ by, for each $v\in X^{\operatorname{dom}(\rho)}$,
	\[
	T_\rho(e_v) := \sum_{\substack{w\in X^{\operatorname{cod}(\rho)} \\ \delta_\rho(v,w)=1}} e_w.
	\]
\end{definition}

\begin{lemma}\label{lem: propsofTs}
The assignment of a linear map $T_\rho$ to a tubular partition $\rho \in \TT$ satisfies
\begin{enumerate}
 \item \label{linearmaptensors} $T_{\rho\otimes\pi} = T_\rho\otimes T_\pi$,
 \item \label{linearmapcomposition} $T_\rho\circ T_\pi = |X|^{rb(\rho,\pi)} T_{\rho\circ\pi}$, and
 \item \label{linearmapadjoint} $(T_{\rho^*}) = T_\rho^*$.
\end{enumerate}
\end{lemma}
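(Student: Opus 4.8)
The plan is to reduce all three assertions to combinatorial statements about the indicator functions $\delta_\rho$ defined before Definition~\ref{def:LinearMaps}, since by that definition the matrix coefficients of $T_\rho$ in the canonical bases are exactly $\langle T_\rho e_v, e_w\rangle = \delta_\rho(v,w)$. Concretely, I would establish the following three identities, each by induction on the level $k$ with $\rho,\pi\in\CC_k$:
\begin{itemize}
\item[(1$'$)] $\delta_{\rho\otimes\pi}\big((v,v'),(w,w')\big)=\delta_\rho(v,w)\,\delta_\pi(v',w')$, under the canonical identification $X^{\gamma\otimes\delta}\cong X^\gamma\times X^\delta$;
\item[(2$'$)] $\displaystyle\sum_{u\in X^{\cod(\pi)}}\delta_\pi(v,u)\,\delta_\rho(u,w)=|X|^{rb(\rho,\pi)}\,\delta_{\rho\circ\pi}(v,w)$;
\item[(3$'$)] $\delta_{\rho^*}(w,v)=\delta_\rho(v,w)$.
\end{itemize}
Granting these, part (1) is immediate from (1$'$) and the identification $\HH_{\gamma\otimes\delta}\cong\HH_\gamma\otimes\HH_\delta$ of Definition~\ref{def:Halpha}; part (3) follows from (3$'$) since $\langle (T_\rho)^* e_w, e_v\rangle=\langle e_w, T_\rho e_v\rangle=\delta_\rho(v,w)=\delta_{\rho^*}(w,v)=\langle T_{\rho^*}e_w,e_v\rangle$ (all quantities being real); and part (2) follows from (2$'$) via the matrix computation $\langle T_\rho T_\pi e_v, e_w\rangle=\sum_u\delta_\pi(v,u)\delta_\rho(u,w)$.

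For (1$'$) and (3$'$) the inductions are routine. The base case $k=1$ uses that in $\NC$ the tensor product is horizontal juxtaposition, so the block set of $p\otimes q$ is the disjoint union of those of $p$ and $q$ and monochromaticity factorises; and that $p^*$ is the vertical reflection, under which the condition ``all points in a block carry the same label'' is manifestly symmetric in the top and bottom labels. The inductive step in each case is immediate from the recursive formula $\delta_\rho=\delta_p\cdot\delta_{\phi'}$: tensoring (resp.\ taking adjoints) of $\rho=(p;\phi_1,\dots,\phi_l)$ corresponds to tensoring (resp.\ taking adjoints) of both the outer partition $p$ and the flattened inner morphism $\phi'=\bigotimes_i\phi_i$, so the identity for $\rho$ reduces to the same identity for $p$ in $\NC$ and for $\phi'$ in $\CC_k$, the latter holding by the inductive hypothesis.

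The crux is (2$'$). The base case $k=1$ is the classical counting lemma underlying easy quantum groups (see \cite{BS2009}): stacking $q$ over $p$ and identifying the middle points, a middle labelling contributes to the sum precisely when every connected component of the stacked diagram is monochromatic; components meeting the top or bottom have their colour forced by $v$ or $w$ and consistency across them is exactly $\delta_{p\circ q}(\mathbf{x},\mathbf{y})=1$, while each component meeting neither---i.e.\ each deleted block---may be coloured freely, contributing one factor of $|X|$, for a total of $|X|^{rb(p,q)}$. For the inductive step I would substitute the recursive formula into the left-hand side and split the generalised middle word $u$ into its outer labels $\mathbf{s}\in X^n$ and its concatenated inner word $\mathbf{u}_{\mathrm{in}}$, obtaining
\[
\sum_{u}\delta_\pi(v,u)\delta_\rho(u,w)=\Big(\sum_{\mathbf{s}\in X^n}\delta_q(\mathbf{x},\mathbf{s})\,\delta_p(\mathbf{s},\mathbf{y})\Big)\Big(\sum_{\mathbf{u}_{\mathrm{in}}}\delta_{\psi'}(\mathbf{v}_{\mathrm{in}},\mathbf{u}_{\mathrm{in}})\,\delta_{\phi'}(\mathbf{u}_{\mathrm{in}},\mathbf{w}_{\mathrm{in}})\Big),
\]
the factorisation being possible because the inner sum is independent of $\mathbf{s}$. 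The outer sum evaluates by the base case to $|X|^{rb(p,q)}\delta_{p\circ q}(\mathbf{x},\mathbf{y})$ and the inner sum by the inductive hypothesis to $|X|^{rb(\phi',\psi')}\delta_{\phi'\circ\psi'}(\mathbf{v}_{\mathrm{in}},\mathbf{w}_{\mathrm{in}})$.

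I expect the main obstacle to be reconciling this product with the right-hand side of (2$'$), which requires two bookkeeping facts. First, that the deleted blocks of $\rho\circ\pi$ are exactly the deleted outer blocks together with the deleted inner blocks, so that $rb(\rho,\pi)=rb(p,q)+rb(\phi',\psi')$ in accordance with Notation~\ref{note: deletedblocks}. Second, that $\delta_{p\circ q}(\mathbf{x},\mathbf{y})\,\delta_{\phi'\circ\psi'}(\mathbf{v}_{\mathrm{in}},\mathbf{w}_{\mathrm{in}})=\delta_{\rho\circ\pi}(v,w)$, i.e.\ that the flattened inner morphism of the composite has the same indicator function as the flat composite $\phi'\circ\psi'$. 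The latter holds because both encode the identical nested stacking of inner structures---regrouping a tensor product of $\CC_k$-morphisms according to the surviving outer blocks of $p\circ q$, or reordering its factors from tube order to block order, leaves the underlying decorated partition, and hence its indicator function, unchanged. Making this invariance precise, and checking that an outer block deleted in $p\circ q$ contributes its inner content to the middle of $\phi'\circ\psi'$ exactly as the recursive count demands, is the step where the argument must be carried out carefully.
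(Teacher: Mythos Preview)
Your proposal is correct and follows the same approach as the paper: both reduce the three claims to the indicator-function identities $\delta_{\rho\otimes\pi}=\delta_\rho\cdot\delta_\pi$, the counting formula for $\sum_u\delta_\pi(v,u)\delta_\rho(u,w)$, and the symmetry $\delta_{\rho^*}(w,v)=\delta_\rho(v,w)$. The paper's argument for~(2) is a direct global count---each deleted block in the stacked diagram contributes a free choice of label, giving $|X|^{rb(\rho,\pi)}$---whereas you organise the same count recursively by induction on $k$, peeling off the outer layer at each step; your bookkeeping concerns about $rb(\rho,\pi)=rb(p,q)+rb(\phi',\psi')$ and the compatibility of flattening with composition are exactly what one must verify to make that recursion go through, and they do hold by the graphical description of composition in $\CC_{k+1}$.
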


\begin{proof}
We have $X^{\alpha\otimes\beta} = X^\alpha \times X^\beta$ for $\alpha,\beta\in \TT$, and then (\ref{linearmaptensors}) follows by verifying that given $v\in X^{\dom(\rho)}, w\in X^{\dom(\pi)}$ and $v'\in X^{\cod(\rho)}, w'\in X^{\cod(\pi)}$ we have $\delta_{\rho\otimes\pi}((v,w),(v',w')) = \delta_\rho(v,v') \delta_\pi(w,w')$.

For (\ref{linearmapcomposition}), fix $v\in X^{\dom(\pi)}$. Then
\begin{align*}
 T_{\rho\circ\pi}(e_v) = \sum_{\delta_{\rho\circ\pi}(v,z)=1} e_z.
\end{align*}
Conversely,
\[
 T_\rho\circ T_\pi(e_v) = \sum_{\delta_\pi(v,w)=1} T_\rho(e_w)= \sum_{\substack{\delta_\pi(v,w)=1 \\ \delta_\rho(w,z)=1}} e_z.
\]
We need to find the number of possible generalised words $w \in X^{\cod(\pi)} = X^{\dom(\rho)}$ that are appearing in non-zero terms of the sum for a given $z\in X^{\cod(\rho)}$. Once $z$ is fixed, any block removed from the composition $\rho\circ\pi$ can have all elements labelled by any fixed letter $x\in X$. Hence there are $|X|^{rb(\rho,\pi)}$ such $w$, and the desired formula follows.

Finally, (\ref{linearmapadjoint}) follows from the fact that $\delta_\rho(v,w) = \delta_{\rho^*}(w,v)$ for any $v\in X^{\dom(\rho)}$ and $w \in X^{\cod(\rho)}$.
\end{proof}


\section{Tannaka-Krein duality} \label{sec:tkduality}

We can now state our main result, which says that the compact quantum group naturally associated to the category of tubular partitions $\TT$ via Tannaka--Krein duality is the quantum automorphism group $(\A_X,\Delta)$. Recall from Definitions~\ref{def:Halpha} and \ref{def:LinearMaps} the Hilbert spaces $\HH_\alpha$ and linear maps $T_\rho$ we associate to the elements of $\TT$. 

%
%
%

\begin{thm} \label{thm:TannakaKrein}
	Let $X$ be a finite set, and $\TT$ the category of tubular partitions. Let $\RR_X = (R, \{\HH_r\}_{r\in R}, \Hom(r,s)_{r,s\in R} )$ be the smallest complete concrete rigid $C^*$-tensor category containing the Banach spaces
	\[
	\RR_X(\alpha,\beta) := \spn\{T_\rho: \rho\in\TT(\alpha,\beta)\}\subset \BB(\HH_\alpha,\HH_\beta),
	\]
	where $\alpha,\beta\in\TT$. Then there is a family $\{u^\alpha\}_{\alpha\in\TT}\subseteq \operatorname{Rep}(\A_X)$ such that $(\mathbb{A}_X,\{u^\alpha\}_{\alpha\in\TT})$ is the unique universal $\RR_X$-admissible pair associated to $(\RR_X,\TT)$.
\end{thm}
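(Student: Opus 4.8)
The plan is to apply Tannaka--Krein duality (\thmref{thm:Wang}) to $\RR_X$, which produces a universal $\RR_X$-admissible pair, and then to verify that $(\A_X,\{u^\alpha\})$ itself has the two defining properties of that pair: it is $\RR_X$-admissible, and it is universal among $\RR_X$-admissible pairs. By the uniqueness clause of \thmref{thm:Wang} this identifies $\A_X$ with the Tannaka--Krein dual of $\RR_X$, and yields the equalities $\RR_X(\alpha,\beta)=\Hom(u^\alpha,u^\beta)$ as a consequence rather than a hypothesis. The bridge between the two sides is the assignment $\alpha\mapsto u^\alpha\in\Rep(\A_X)$, anchored by $u^1=a^{(1)}$ (Notation~\ref{note: a_n}) and, for the objects $\psi_n$ of Notation~\ref{note:Psiof1}, by $u^{\psi_n}=a^{(n+1)}$ on $\HH_{\psi_n}\cong\C^{X^{n+1}}$; a general object $\gamma=(m;\alpha_1,\dots,\alpha_m)$ is assigned the representation built from the top-level magic unitary $a^{(1)}$ and the inner $u^{\alpha_i}$ according to the iterated free wreath product description of $\A_X$, with $\Psi$ implemented by the self-similar structure so that $u^{\Psi(\alpha)}$ acts on $\C^X\otimes\HH_\alpha$.

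First I would establish admissibility, namely that each $T_\rho$ intertwines $u^{\dom(\rho)}$ and $u^{\cod(\rho)}$. By the generation result of \secref{subsec:generation} it suffices to treat the generators. The generators $\idpart,\pants,\cappart$ of $\NC$ reduce to the standard computation that non-crossing partitions intertwine the magic unitary $a^{(1)}$, using the projection and row/column-sum relations recorded in Remark~\ref{rmk:consequencefromAX}. The genuinely new input is the self-similarity generator $\Psi(\varphi)$ and the multiplicativity generator $P_{\alpha,\beta}$; here one shows from the definition of $u^{\Psi(\alpha)}$ that $\Psi$ and $\otimes$ are compatible with the representation assignment, so that $T_{\Psi(\varphi)}$ and $T_{P_{\alpha,\beta}}$ are again intertwiners. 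A guiding special case, which already forces relation~(3), is the morphism $\Psi^n(\cappart)\in\TT(\psi_{n-1},\psi_n)$: a direct computation with $\delta$ gives $T_{\Psi^n(\cappart)}(e_u)=\sum_{x\in X}e_{ux}$, and the intertwiner identity $(T_{\Psi^n(\cappart)}\otimes 1)a^{(n)}=a^{(n+1)}(T_{\Psi^n(\cappart)}\otimes 1)$ is precisely the self-similarity relation $a_{u,v}=\sum_{y}a_{ux,vy}$; its adjoint, via \lemref{lem: propsofTs}(\ref{linearmapadjoint}), together with the magic-unitary symmetry supplies the companion identity $a_{u,v}=\sum_z a_{uz,vx}$.

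Next I would prove universality. Given any $\RR_X$-admissible pair $(C,\{w^\alpha\})$, I define a candidate homomorphism $\A_X\to C$ on generators by sending $a_{u,v}$ (for $u,v\in X^n$) to the $(u,v)$-entry of $w^{\psi_{n-1}}$, and check that these entries satisfy the defining relations of $\A_X$, so that the assignment extends to a $C^*$-homomorphism intertwining $u^\alpha$ and $w^\alpha$. Relation~(1) is the statement that $w^0$ is the one-dimensional trivial representation attached to the unit object; relation~(2) follows from the intertwiner conditions for the $\pants$- and $\cappart$-type morphisms together with \lemref{lem: propsofTs}(\ref{linearmapadjoint}), which force each entry to be a self-adjoint projection; and relation~(3) is exactly the intertwiner condition for $\Psi^n(\cappart)$ identified above, now read off in $C$. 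This exhibits $\A_X$ as universal.

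Finally the two properties combine: admissibility of $(\A_X,\{u^\alpha\})$ and \thmref{thm:Wang} produce a homomorphism from the Tannaka--Krein dual onto $\A_X$, while universality of $\A_X$ produces one in the opposite direction; tracing the generators $a_{u,v}$ and the entries of the $w^{\psi_n}$ through both composites shows the maps are mutually inverse, using that the dual is generated by the entries of its generating representations and that every object of $\TT$ is a subobject of a tensor product of the $\psi_\ell$ by \lemref{lem:decomposingobjects}. The main obstacle is the self-similarity encoded by relation~(3): both directions hinge on correctly defining $u^{\Psi(\alpha)}$ and verifying that $\Psi(\varphi)$ and $P_{\alpha,\beta}$ intertwine, which is the categorical shadow of the free-wreath-product recursion of $\A_X$ and has no analogue in the classical partition calculus for $S_X^+$.
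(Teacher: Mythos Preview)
Your proposal is correct and follows essentially the same strategy as the paper: define $u^\alpha$ inductively via the self-similarity map $\psi$ on representations (the paper makes this precise in Propositions~\ref{prop:psiobj} and~\ref{prop:psirep}, which are exactly the intertwiner checks for $\Psi(\varphi)$ and $P_{\alpha,\beta}$ you flag as the new ingredients), verify the intertwiner conditions on the generators of \secref{subsec:generation}, and for universality send $a_{u,v}$ to the matrix entries of $w^{\psi_{n-1}}$ and use \lemref{lem:decomposingobjects} to pass from $(\id\otimes\varphi)u^{\psi_n}=w^{\psi_n}$ to $(\id\otimes\varphi)u^\alpha=w^\alpha$ for all $\alpha$. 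The paper's organization is marginally more direct in that it shows $(\A_X,\{u^\alpha\})$ is itself the universal $\RR_X$-admissible pair and then simply invokes the uniqueness clause of \thmref{thm:Wang}, rather than constructing two homomorphisms and checking they are mutually inverse, but the content is the same.
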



It is proved in \cite[Theorem~4.5]{BR2025} that the map $\Aut(X^*)\times X\to X\times\Aut(X^*)$ given by $(g,x)\mapsto (g(x),g|_x)$ has a quantum analogue in the form of a homomorphism $\psi : \C^X \otimes \A_X \to \A_X\otimes\C^X$ satisfying $(\Delta\otimes\id)\psi = (\id\otimes\psi)(\psi\otimes\id)(\id\otimes\Delta)$. We begin the proof of Theorem~\ref{thm:TannakaKrein} by showing that $\psi$ induces a map on $\Rep(\A_X)$. 

\begin{prop} \label{prop:psiobj}
	Let $u = (u_{i,j})$ be a finite-dimensional unitary representation of $\A_X$ on $\HH$. Then there is a unitary representation $\psi(u) = (\psi(u)_{(x,i),(y,j)}) \in \Rep(\A_X)$ on the Hilbert space $\C^X\otimes \HH$
	such that the matrix elements satisfy the relation
	\[
	\psi(e_x\otimes u_{i,j}) = \sum_{y\in X}\psi(u)_{(x,i),(y,j)} \otimes e_y.
	\]
	Moreover, if $v \in \Rep(\A_X)$ is a representation on $\KK$, and $T\in\Rep(\A_X)(u,v)$ is an intertwiner between $u$ and $v$, then $	\id_{\C^X}\otimes T \in \BB(\C^X\otimes\HH,\C^X\otimes\KK)$	is an intertwiner between $\psi(u)$ and $\psi(v)$.
\end{prop}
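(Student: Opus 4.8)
The plan is to define the matrix entries of $\psi(u)$ as the coefficients appearing in the required formula and then to verify the three assertions---the corepresentation identity, unitarity, and the intertwiner property---in turn. Concretely, since $\{e_y\}_{y\in X}$ is a basis of $\C^X$, for each $x$ and each matrix entry $u_{i,j}$ of $u$ there are unique elements $\psi(u)_{(x,i),(y,j)}\in\A_X$ with $\psi(e_x\otimes u_{i,j})=\sum_{y\in X}\psi(u)_{(x,i),(y,j)}\otimes e_y$; this is the definition of $\psi(u)$, and it makes the displayed formula hold by construction. To obtain the corepresentation identity I would apply both sides of the relation $(\Delta\otimes\id)\psi=(\id\otimes\psi)(\psi\otimes\id)(\id\otimes\Delta)$ to $e_x\otimes u_{i,j}$. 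On the left, the definition of $\psi(u)$ gives $\sum_y\Delta(\psi(u)_{(x,i),(y,j)})\otimes e_y$. On the right, expanding $\Delta(u_{i,j})=\sum_k u_{i,k}\otimes u_{k,j}$ and applying $\psi\otimes\id$ and then $\id\otimes\psi$ produces $\sum_y\big(\sum_{z,k}\psi(u)_{(x,i),(z,k)}\otimes\psi(u)_{(z,k),(y,j)}\big)\otimes e_y$. Comparing coefficients of $e_y$ yields exactly the corepresentation identity for $\psi(u)$ on $\C^X\otimes\HH$.

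Unitarity is where I expect to do the real work, and it uses that $\psi$ is a unital $*$-homomorphism together with the value $\psi(e_x\otimes 1)=\sum_y a_{x,y}\otimes e_y$ (the value of $\psi$ on the trivial representation, from \cite{BR2025}). For $\psi(u)\psi(u)^*=1$ I would apply $\psi$ to $\sum_j(e_x\otimes u_{i,j})(e_{x'}\otimes u_{i',j})^*$; using $e_xe_{x'}=\delta_{x,x'}e_x$ and that the rows of $u$ are orthonormal, this element equals $\delta_{x,x'}\delta_{i,i'}(e_x\otimes 1)$, so after applying $\psi$, comparing $e_y$-coefficients, and summing over $y$ with $\sum_y a_{x,y}=1$ (Remark~\ref{rmk:consequencefromAX}) one gets $(\psi(u)\psi(u)^*)_{(x,i),(x',i')}=\delta_{(x,i),(x',i')}$. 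For $\psi(u)^*\psi(u)=1$, the diagonal $y=y'$ contribution comes similarly from $\psi$ applied to $\sum_{x,i}(e_x\otimes u_{i,j})^*(e_x\otimes u_{i,j'})=\delta_{j,j'}(1\otimes 1)$, using orthonormality of the columns of $u$ and $\sum_x a_{x,y}=1$.

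The off-diagonal ($y\neq y'$) part of $\psi(u)^*\psi(u)$ is the step requiring an extra observation, and I expect it to be the crux of the argument. Applying $\psi$ to the trivial identity $(e_x\otimes 1)(e_x\otimes u_{i,j})=e_x\otimes u_{i,j}$ and comparing $e_y$-coefficients gives $a_{x,y}\psi(u)_{(x,i),(y,j)}=\psi(u)_{(x,i),(y,j)}$; taking adjoints (recall $a_{x,y}=a_{x,y}^*$) gives $\psi(u)_{(x,i),(y,j)}^*=\psi(u)_{(x,i),(y,j)}^*a_{x,y}$. Hence $\psi(u)_{(x,i),(y,j)}^*\psi(u)_{(x,i),(y',j')}=\psi(u)_{(x,i),(y,j)}^*\,a_{x,y}a_{x,y'}\,\psi(u)_{(x,i),(y',j')}$, and since $a_{x,y}$ and $a_{x,y'}$ lie in the same row of the fundamental magic unitary $a^{(1)}$ they satisfy $a_{x,y}a_{x,y'}=\delta_{y,y'}a_{x,y}$, so every $y\neq y'$ term vanishes termwise. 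Together with the diagonal computation this gives $(\psi(u)^*\psi(u))_{(y,j),(y',j')}=\delta_{y,y'}\delta_{j,j'}$, completing the proof that $\psi(u)$ is unitary and hence lies in $\Rep(\A_X)$.

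Finally, the intertwiner claim is a short linearity argument that needs none of the above machinery. Writing $T=(T_{k,i})$, the hypothesis $(T\otimes 1)u=v(T\otimes 1)$ reads $\sum_i T_{k,i}u_{i,j}=\sum_l v_{k,l}T_{l,j}$ for all $k,j$. Applying the linear map $\psi$ to the equal elements $e_y\otimes\big(\sum_i T_{k,i}u_{i,j}\big)$ and $e_y\otimes\big(\sum_l v_{k,l}T_{l,j}\big)$ and reading off the coefficient of $e_{y'}$ on each side (the scalars $T_{k,i}$ pass through $\psi$) yields $\sum_i T_{k,i}\psi(u)_{(y,i),(y',j)}=\sum_l\psi(v)_{(y,k),(y',l)}T_{l,j}$. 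Since the $((y,k),(x,i))$ entry of $\id_{\C^X}\otimes T$ is $\delta_{x,y}T_{k,i}$, this is precisely the statement that $\id_{\C^X}\otimes T$ intertwines $\psi(u)$ and $\psi(v)$.
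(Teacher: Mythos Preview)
Your proposal is correct and follows essentially the same route as the paper: the corepresentation identity is obtained by applying $(\Delta\otimes\id)\psi=(\id\otimes\psi)(\psi\otimes\id)(\id\otimes\Delta)$ to $e_x\otimes u_{i,j}$ and comparing $e_y$-coefficients, the identity $\psi(u)\psi(u)^*=1$ comes from applying the homomorphism $\psi$ to the row-orthonormality relation for $u$ together with $\psi(e_x\otimes 1)=\sum_y a_{x,y}\otimes e_y$, and the intertwiner claim is the same coefficient-comparison argument. In fact you are more careful than the paper on unitarity: the paper only writes out the verification of $\psi(u)\psi(u)^*=1$, whereas you also treat $\psi(u)^*\psi(u)=1$, and your off-diagonal step (the absorption identity $a_{x,y}\psi(u)_{(x,i),(y,j)}=\psi(u)_{(x,i),(y,j)}$ combined with orthogonality of $a_{x,y}$ and $a_{x,y'}$) is exactly the observation the paper later isolates as Lemma~\ref{lemma:orthogpsi}. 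One small remark: the orthogonality you invoke, $a_{x,y}a_{x,y'}=\delta_{y,y'}a_{x,y}$, is the row analogue of Remark~\ref{rmk:consequencefromAX}(i); it is standard for magic unitaries but not literally stated there, so you may want to note it follows from the row-sum relation in~(ii) together with the projection property.
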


\begin{proof}
	We have
	\begin{align*}
		\sum_{y\in X} \Delta(\psi(u)_{(x,i),(y,j)}) \otimes e_y &= (\Delta\otimes\id)\psi(e_x\otimes u_{i,j}) \\
		&= (\id\otimes\psi)(\psi\otimes\id)(\id\otimes\Delta)(e_x\otimes u_{i,j}) \\
		&= \sum_{1\le k\le d_u}(\id\otimes\psi)(\psi\otimes\id)(e_x\otimes u_{i,k}\otimes u_{k,j})) \\
		&= \sum_{\substack{z\in X\\ 1\le k\le d_u}} (\id\otimes\psi)(\psi(u)_{(x,i),(z,k)}\otimes e_z \otimes u_{k,j}) \\
		&= \sum_{\substack{y,z\in X\\ 1\le k\le d_u}}\psi(u)_{(x,i),(z,k)} \otimes \psi(u)_{(z,k),(y,j)} \otimes e_y,
	\end{align*}
	and comparing tensor factors gives 
	\[
	\Delta(\psi(u)_{(x,i),(y,j)}) = \sum_{\substack{z\in X\\ 1\le k\le d_u}} \psi(u)_{(x,i),(z,k)} \otimes \psi(u)_{(z,k),(y,j)}.
	\]
	To see that $\psi(u)$ is a representation, it remains to show that $\psi(u)$ is unitary. First note that 
	\begin{align*}
		\sum_{x\in X}\psi(u^*)_{(y,i),(x,j)}\otimes e_x &= \psi(e_y\otimes (u^*)_{i,j})\\
		&=\psi(e_y\otimes u_{j,i})^*\\
		&=\sum_{x\in X}\psi(u)_{(y,j),(x,i)}^*\otimes e_x\\
		&=\sum_{x\in X}(\psi(u)^*)_{(x,i),(y,j)}\otimes e_x,
	\end{align*}
	and hence we have 
	\[
	(\psi(u)^*)_{(x,i),(y,j)}=\psi(u^*)_{(y,i),(x,j)}.
	\]
	Now it follows that for each $x,y\in X$ and $1\le i,j\le d_u$ we have
	\begin{align*}
		&\sum_{\substack{z\in X\\ 1\le k\le d_u}}\psi(u)_{(x,i),(z,k)}\psi(u)_{(z,k),(y,j)}^*\otimes e_z\\
		&\hspace{3cm}= \sum_{\substack{z\in X\\ 1\le k\le d_u}}\psi(u)_{(x,i),(z,k)}\psi(u^*)_{(y,k),(z,j)}\otimes e_z\\
		&\hspace{3cm}= \sum_{1\le k\le d_u}\Big(\sum_{z\in X}\psi(u)_{(x,i),(z,k)}\otimes e_z\Big)\Big(\sum_{z'\in X}\psi(u^*)_{(y,k),(z',j)}\otimes e_{z'}\Big)\\
		&\hspace{3cm}=\sum_{1\le k\le d_u}\psi(e_x\otimes u_{i,k})\psi(e_y\otimes u_{k,j}^*)\\
		&\hspace{3cm}=\psi\Big(e_xe_y\otimes\sum_{1\le k\le d_u}u_{i,k}u_{k,j}^*\Big)\\
		&\hspace{3cm}=\delta_{x,y}\delta_{i,j}\psi(e_x\otimes 1_{\mathbb{A}_X})\\
		&\hspace{3cm}=\delta_{x,y}\delta_{i,j} \sum_{z\in X}a_{x,z}\otimes e_z,
	\end{align*}
	and hence for each $z\in X$ we have 
	\[
	\sum_{1\le k\le d_u}\psi(u)_{(x,i),(z,k)}\psi(u)_{(z,k),(y,j)}^*=\delta_{x,y}\delta_{i,j} a_{x,z}.
	\]
	Hence 
	\[
	\sum_{\substack{z\in X\\ 1\le k\le d_u}}\psi(u)_{(x,i),(z,k)}\psi(u)_{(z,k),(y,j)}^*=\delta_{x,y}\delta_{i,j}\sum_{z\in X}a_{x,z}=\delta_{x,y}\delta_{i,j}1_{\mathbb{A}_X}
	\]
	showing that $\psi(u)$ is a unitary representation. 
	
	For the second claim, we view $T=(T_{i,j})$ and $(\id_{\C^X}\otimes T)_{(x,i),(y,j)}$ as matrices; we need to show that for any $x,y\in X$ and $1\leq i \leq d_v$, $1\leq j \leq d_u$
	\[
	\sum_{\substack{z\in X\\ 1\le k\le d_u}}(\id_{\C^X}\otimes T)_{(x,i),(z,k)}\psi(u)_{(z,k),(y,j)}=\sum_{\substack{z\in X\\ 1\le k\le d_u}}\psi(v)_{(x,i),(z,k)}(\id_{\C^X}\otimes T)_{(z,k),(y,j)},
	\]
	which is equivalent to 
	\begin{equation}\label{eq:IntIdTesorT}
		\sum_{1\le k\le d_u}T_{i,k}\psi(u)_{(x,k),(y,j)}=\sum_{1\le k\le d_u}\psi(v)_{(x,i),(y,k)}T_{k,j}.
	\end{equation}
	Equation \ref{eq:IntIdTesorT} follows from the calculation:
	\begin{align*}
		\sum_{y\in X}\Big(\sum_{1\le k\le d_u}T_{i,k}\psi(u)_{(x,k),(y,j)}\Big)\otimes e_y 
		&= \sum_{1\le k\le d_u}T_{i,k}\Big(\sum_{y\in X}\psi(u)_{(x,k),(y,j)}\otimes e_y\Big)\\
		&= \sum_{1\le k\le d_u}T_{i,k}\psi(e_x\otimes u_{k,j})\\
		&=\psi\Big(e_x\otimes \Big(\sum_{1\le k\le d_u}T_{i,k}u_{k,j}\Big)\Big)\\
		&=\psi\Big(e_x\otimes \Big(\sum_{1\le k\le d_u}v_{i,k}T_{k,j}\Big)\Big)\\
		&=\sum_{y\in X}\Big(\sum_{1\le k\le d_u}\psi(v)_{(x,i),(y,k)}T_{k,j}\Big)\otimes e_y.
	\end{align*}
\end{proof}

\begin{prop} \label{prop:psirep}
	Fix finite-dimensional unitary representations $u, v$ of $\A_X$ on $\HH, \KK$ respectively. The linear mapping
	\[
	T: \C^X\otimes\HH\otimes\KK \to \C^X\otimes\HH\otimes\C^X\otimes\KK
	\]
	satisfying
	\[
	T(e_x\otimes h\otimes k) =e_x\otimes h\otimes e_x \otimes k
	\]
	for all $x\in X,h\in\HH,k\in\KK$, is an intertwiner between $\psi(u\otimes v)$ and $\psi(u)\otimes\psi(v)$.
\end{prop}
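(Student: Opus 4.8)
The plan is to verify directly that $T$ satisfies the intertwiner relation
\[
(T\otimes 1_{\A_X})\,\psi(u\otimes v)=\big(\psi(u)\otimes\psi(v)\big)\,(T\otimes 1_{\A_X}),
\]
working at the level of matrix elements. The governing identity for $\psi$ on any representation is the characterising relation from Proposition~\ref{prop:psiobj}, namely $\psi(e_x\otimes w_{i,j})=\sum_{y}\psi(w)_{(x,i),(y,j)}\otimes e_y$, together with the multiplicativity of $\psi$ as a homomorphism. First I would fix generalised matrix indices and expand both sides of the intertwiner equation, using that $T$ sends $e_x\otimes h\otimes k$ to $e_x\otimes h\otimes e_x\otimes k$; the point is that $T$ duplicates the $\C^X$-label $x$, so its matrix entries are supported on indices where the two copies of the $X$-coordinate agree.

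Next I would compute the matrix elements of $\psi(u\otimes v)$ and of $\psi(u)\otimes\psi(v)$. The key observation is that $u\otimes v$ has matrix elements $(u\otimes v)_{(i,i'),(j,j')}=u_{i,j}\,v_{i',j'}$, so by the defining relation for $\psi$ and its multiplicativity we get
\[
\sum_{y}\psi(u\otimes v)_{(x,i,i'),(y,j,j')}\otimes e_y=\psi\big(e_x\otimes u_{i,j}v_{i',j'}\big)=\psi(e_x\otimes u_{i,j})\,\psi(e_x\otimes v_{i',j'}),
\]
where the last step uses $e_x e_x=e_x$ in $\C^X$ so that the two copies of $x$ collapse to one. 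Expanding the right-hand product via the defining relation produces exactly the matrix elements of $\psi(u)\otimes\psi(v)$, but with a \emph{single} intermediate $X$-label shared between the two factors rather than two independent ones. This shared label is precisely what $T$ supplies by duplication: the relation $T(e_x\otimes h\otimes k)=e_x\otimes h\otimes e_x\otimes k$ is the combinatorial device that identifies the two $\C^X$-legs that $\psi(u)\otimes\psi(v)$ would otherwise keep separate.

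To finish, I would compare the matrix entries obtained on each side after inserting $T$. On one side, applying $\psi(u\otimes v)$ first and then $T$ (equivalently, using the collapsed-label computation above) yields a sum indexed by a single $z\in X$; on the other side, $\psi(u)\otimes\psi(v)$ naturally carries two $X$-labels, and precomposing with $T$ restricts them to be equal, again collapsing to one index. Matching the two expressions coefficient by coefficient completes the verification.

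The main obstacle, and the step deserving the most care, is the bookkeeping of the $\C^X$-labels: one must be precise about which $X$-coordinate in $\psi(u)\otimes\psi(v)$ corresponds to which, and confirm that the $e_xe_x=e_x$ collapse in $\C^X$ on the $\psi(u\otimes v)$ side is matched exactly by the label-duplication built into $T$. Once the indices are set up consistently, the computation is a routine application of the defining relation for $\psi$ and its homomorphism property, closely parallel to the intertwiner argument already carried out in the proof of Proposition~\ref{prop:psiobj}.
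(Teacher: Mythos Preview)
Your approach is essentially the paper's, and the key computation you describe---using multiplicativity of $\psi$ together with $e_x e_x=e_x$ to obtain
\[
\psi(u\otimes v)_{(x,i,i'),(z,j,j')}=\psi(u)_{(x,i),(z,j)}\,\psi(v)_{(x,i'),(z,j')}
\]
---is exactly right. But there is a genuine gap in the finishing step. When you write that ``precomposing with $T$ restricts the two $X$-labels to be equal,'' that is true only on the \emph{domain} side. Writing out the intertwiner equation at matrix index $\big((x,i,y,j),(z,k,l)\big)$ gives
\[
\delta_{x,y}\,\psi(u\otimes v)_{(x,i,j),(z,k,l)}\;=\;\psi(u)_{(x,i),(z,k)}\,\psi(v)_{(y,j),(z,l)},
\]
and nothing you have said forces the row labels $x$ and $y$ to agree. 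The $e_xe_x=e_x$ collapse handles the case $x=y$, but you still have to show that the right-hand side vanishes when $x\neq y$.

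This is not bookkeeping: it requires a separate orthogonality fact, namely that
\[
\psi(u)_{(x,i),(z,k)}\,\psi(v)_{(y,j),(z,l)}=0\quad\text{whenever }x\neq y.
\]
The paper isolates this as Lemma~\ref{lemma:orthogpsi}, proved by observing that $\psi(e_x\otimes 1_{\A_X})=\sum_z a_{x,z}\otimes e_z$ forces $\psi(u)_{(x,i),(z,k)}=\psi(u)_{(x,i),(z,k)}a_{x,z}$ and $a_{y,z}\psi(v)_{(y,j),(z,l)}=\psi(v)_{(y,j),(z,l)}$, so the product picks up a factor $a_{x,z}a_{y,z}=0$. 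Without this step your argument only verifies the intertwiner identity on the diagonal $x=y$.
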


Before proving Proposition \ref{prop:psirep} we need a lemma. 

\begin{lemma} \label{lemma:orthogpsi}
	Fix $u,v \in\Rep(\A_X)$ and $x_1\neq x_2\in X$. Then
	\[
	\psi(u)_{(x_1,i),(y,j)}\psi(v)_{(x_2,i'),(y,j')} = 0
	\]
	for any $1\leq i,j \leq d_u, 1\leq i',j'\leq d_v$ and $y\in X$.
\end{lemma}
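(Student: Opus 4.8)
The plan is to exploit the fact that $\psi\colon \C^X\otimes\A_X\to\A_X\otimes\C^X$ is an algebra homomorphism, together with the orthogonality of the minimal projections of $\C^X$. Since $x_1\neq x_2$ we have $e_{x_1}e_{x_2}=0$ in $\C^X$, and hence
\[
(e_{x_1}\otimes u_{i,j})(e_{x_2}\otimes v_{i',j'})=e_{x_1}e_{x_2}\otimes u_{i,j}v_{i',j'}=0
\]
in $\C^X\otimes\A_X$. Applying $\psi$ and using that it is multiplicative therefore yields $\psi(e_{x_1}\otimes u_{i,j})\,\psi(e_{x_2}\otimes v_{i',j'})=0$.

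Next I would expand each factor using the defining relation of $\psi(u)$ from Proposition~\ref{prop:psiobj}, namely $\psi(e_x\otimes u_{i,j})=\sum_{y}\psi(u)_{(x,i),(y,j)}\otimes e_y$. Multiplying the two resulting sums in $\A_X\otimes\C^X$ and using $e_ye_{y'}=\delta_{y,y'}e_y$ in the right-hand tensor leg collapses the double sum to a single one:
\[
0=\psi(e_{x_1}\otimes u_{i,j})\,\psi(e_{x_2}\otimes v_{i',j'})=\sum_{y\in X}\psi(u)_{(x_1,i),(y,j)}\,\psi(v)_{(x_2,i'),(y,j')}\otimes e_y.
\]
Finally, since the vectors $\{e_y\}_{y\in X}$ are linearly independent in $\C^X$, every coefficient in this expansion must vanish, giving exactly $\psi(u)_{(x_1,i),(y,j)}\,\psi(v)_{(x_2,i'),(y,j')}=0$ for each $y\in X$.

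There is no serious obstacle here: the only point requiring care is the bookkeeping of which tensor factor carries $\C^X$ (it sits on the left in the domain of $\psi$ but on the right in its codomain), so that the cancellation $e_{x_1}e_{x_2}=0$ is applied in the domain while the separation of coefficients via linear independence of $\{e_y\}$ is applied in the codomain. The essential idea is simply to recognise that the desired orthogonality relation is nothing but the image under $\psi$ of the trivial relation $e_{x_1}e_{x_2}=0$.
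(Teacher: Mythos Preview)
Your argument is correct. Both your proof and the paper's hinge on the multiplicativity of $\psi$, but you apply it more directly: you feed the relation $e_{x_1}e_{x_2}=0$ straight into $\psi$ and read off the coefficients of $e_y$ in the codomain. The paper instead first multiplies $\psi(e_x\otimes u_{i,j})$ by $\psi(e_x\otimes 1_{\A_X})=\sum_y a_{x,y}\otimes e_y$ to extract the absorption identity $\psi(u)_{(x,i),(y,j)}=\psi(u)_{(x,i),(y,j)}a_{x,y}=a_{x,y}\psi(u)_{(x,i),(y,j)}$, and then invokes $a_{x_1,y}a_{x_2,y}=0$ from Remark~\ref{rmk:consequencefromAX}(i). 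Your route is shorter and avoids any reference to the generators $a_{x,y}$; the paper's detour has the minor advantage of isolating the absorption identity, which makes explicit how the matrix entries of $\psi(u)$ sit inside the corner $a_{x,y}\A_X a_{x,y}$, though this is not reused elsewhere in the paper.
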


\begin{proof}
	We have 
	\begin{align*}
		\sum_{y\in X}\psi(u)_{(x,i),(y,j)}\otimes e_y &= \psi(e_x\otimes u_{i,j}) \\
		&= \psi(e_x\otimes u_{i,j})\psi(e_x\otimes 1_{\A_X}) \\
		&= \psi(e_x\otimes u_{i,j})\sum_{y'\in X}a_{x,y'}\otimes e_{y'}\\
		&= \sum_{y\in X}\psi(u)_{(x,i),(y,j)}a_{x,y}\otimes e_y,
	\end{align*}
	and it follows that $\psi(u)_{(x,i),(y,j)}=\psi(u)_{(x,i),(y,j)}a_{x,y}$. Similarly, we have $a_{x,y}\psi(u)_{(x,i),(y,j)}=\psi(u)_{(x,i),(y,j)}$. Hence we have 
	\[
	\psi(u)_{(x_1,i),(y,j)}\psi(v)_{(x_2,i'),(y,j')} = \psi(u)_{(x_1,i),(y,j)} a_{x_1,y} a_{x_2,y} \psi(v)_{(x_2,i'),(y,j')} = 0.
	\]
\end{proof}


\begin{proof}[Proof of Proposition~\ref{prop:psirep}]
	If we write $T$ as a matrix $T=(T_{(x,i,y,j),(z,k,l)})$, where $x,y,z\in X$, $1\le i,k\le d_u$, $1\le j,l\le d_v$,  we have
	\[
	T_{(x,i,y,j),(z,k,l)}=\begin{cases} 1 & \text{if $x=y=z$, $i=k$, $j=l$}\\ 0 &\text{otherwise.}\end{cases}
	\]
	We need to prove that 
	\[
	(T\psi(u\otimes v))_{(x,i,y,j),(z,k,l)}=((\psi(u)\otimes\psi(v))T)_{(x,i,y,j),(z,k,l)},
	\]
	which is equivalent to 
	\begin{equation}\label{eq:TensorEntries}
		\delta_{x,y}\psi(u\otimes v)_{(x,i,j),(z,k,l)}=\psi(u)_{(x,i),(z,k)}\psi(v)_{(y,j),(z,l)}.
	\end{equation}
	By Lemma~\ref{lemma:orthogpsi} we know the right-hand side of \eqref{eq:TensorEntries} is
	\[
	\delta_{x,y}\psi(u)_{(x,i),(z,k)}\psi(v)_{(x,j),(z,l)}.
	\]
	We also have  
	\begin{align*}
		\psi(e_x\otimes (u\otimes v)_{(i,j),(k,l)}) &= \psi(e_x\otimes u_{i,k}v_{j,l})\\
		&= \psi(e_x\otimes u_{i,k})\psi(e_x\otimes v_{j,l})\\
		&= \sum_{z\in X}\psi(u)_{(x,i),(z,k)}\psi(v)_{(x,j),(z,l)}\otimes e_z,
	\end{align*}
	and it follows that 
	\[
	\psi(u\otimes v)_{(x,i,j),(z,k,l)}=\psi(u)_{(x,i),(z,k)}\psi(v)_{(x,j),(z,l)}.
	\]
	Hence \eqref{eq:TensorEntries} holds.
\end{proof}


\begin{proof}[Proof of Theorem \ref{thm:TannakaKrein}]
We define the representations $u^\alpha$, $\alpha\in\TT$, so that $(\A_X,\{u^\alpha\}_{\alpha\in\TT})$ is a universal $\RR_X$-admissible pair. Theorem~\ref{thm:Wang} will then give uniqueness and complete the proof. 

We define the representations $u^\alpha$ inductively: for $m\in \CC_1$ define $u^0:=a^{(0)}=(1_{\A_X})$, and $u^m  := (a^{(1)})^{\otimes m}$ for $m\ge 1$. Then assuming we have defined unitary representations $u^\alpha$ for any $\alpha \in \CC_k$, if $\gamma = (m;\alpha_1,\dots,\alpha_m) \in \CC_{k+1}$, define $ u^{\gamma} := \bigotimes_{i=1}^m \psi(u^{\alpha_i})$, where $\psi(u^{\alpha_i})$ is the unitary representation defined in Proposition~\ref{prop:psiobj}. Since $a^{(1)}$ is a unitary, we know that each $u^m$ is a unitary, and then it follows from Proposition~\ref{prop:psiobj} that each $u^\gamma$ is a unitary. 

It is clear from this definition that $u^{\alpha\otimes\beta} = u^\alpha\otimes u^\beta$ for all $\alpha, \beta \in \TT$. We now need to show that $u^\alpha(T\otimes 1_B) = (T\otimes 1_B)u^\beta$ for all  $\alpha,\beta\in\TT$ and $T\in\RR_X(\alpha,\beta)$. It suffices to show that this identity holds for maps of the form $T_\rho$ for $\rho\in\TT(\alpha,\beta)$, and for this we only need to consider $\rho$ one of the generating morphisms from \eqref{listofgenerators}. That is, we need to show that 
\begin{itemize}
	\item[(1)] $(T_{\idpart}\otimes 1_{\A_X})u^1 = u^1(T_{\idpart}\otimes 1_{\A_X})$,
	\item[(2)] $(T_{\pants}\otimes 1_{\A_X})u^1 = u^2(T_{\pants}\otimes 1_{\A_X})$, and 
	\item[(3)] $(T_{\cappart}\otimes 1_{\A_X})u^0 = u^1(T_{\cappart}\otimes 1_{\A_X})$;
\end{itemize}
and for each $k\ge 1$, $\alpha,\beta\in \CC_k$ and $\varphi\in\CC_k(\alpha,\beta)$, 
\begin{itemize}
	\item[(4)] $(T_{P_{\alpha,\beta}}\otimes 1_{\A_X})u^{\Psi(\alpha\otimes\beta)}=u^{\Psi(\alpha)\otimes\Psi(\beta)}(T_{P_{\alpha,\beta}}\otimes 1_{\A_X})$, and 
	\item[(5)] $(T_\varphi\otimes 1_{\A_X})u^{\alpha}=u^{\beta}(T_\varphi\otimes 1_{\A_X})\implies (T_{\Psi(\varphi)}\otimes 1_{\A_X})u^{\Psi(\alpha)}=u^{\Psi(\beta)}(T_{\Psi(\varphi)}\otimes 1_{\A_X})$.
\end{itemize}

Identity (1) follows immediately because $T_{\idpart}$ is the identity map on $\C^X$. For (2) and (3) we treat the elements as matrices. To see that (2) holds, first note that for $x,y,z\in X$ we have $(T_{\pants}\otimes 1_{\A_X})_{(x,y),z}=\delta_{x,z}\delta_{y,z}1_{\A_X}$. Then we have
\begin{align*}
 &((T_{\pants}\otimes 1_{\A_X})u^1)_{(x,y),z} = (u^2(T_{\pants}\otimes 1_{\A_X}))_{(x,y),z} \\
 &\hspace{1.5cm}\iff ((T_{\pants}\otimes 1_{\A_X})a^{(1)})_{(x,y),z} = ((a^{(1)}\otimes a^{(1)})(T_{\pants}\otimes 1_{\A_X}))_{(x,y),z}\\
 &\hspace{1.5cm}\iff \sum_{z'\in X}(T_{\pants}\otimes 1_{\A_X})_{(x,y),z'}a_{z'z}=\sum_{x',y'\in X}a_{x,x'}a_{y,y'}(T_{\pants}\otimes 1_{\A_X}))_{(x',y'),z}\\
 &\hspace{1.5cm}\iff \delta_{x,y}a_{x,z}=a_{x,z}a_{y,z}.
\end{align*}
We know from Remark~\ref{rmk:consequencefromAX} (i) that $\delta_{x,y}a_{x,z}=a_{x,z}a_{y,z}$, and so (2) holds.

To see that (3) holds, let $x\in X$. Then $(T_{\cappart}\otimes 1_{\A_X})_{x,1}=1_{\A_X}$, and so we have
\begin{align*}
	((T_{\cappart}\otimes 1_{\A_X})u^0)_{x,1} &= (u^1(T_{\cappart}\otimes 1_{\A_X}))_{x,1}\\
	&\iff ((T_{\cappart}\otimes 1_{\A_X})(1_{\A_x}))_{x,1} = (a^{(1)}(T_{\cappart}\otimes 1_{\A_X}))_{x,1}\\
	&\iff (T_{\cappart}\otimes 1_{\A_X})_{x,1}(1_{\A_x})_{1,1}=\sum_{y\in X} a_{x,y}(T_{\cappart}\otimes 1_{\A_X})_{y,1}\\
	&\iff 1_{\A_X}=\sum_{y\in X} a_{x,y}.
\end{align*}
We know from Remark~\ref{rmk:consequencefromAX} (ii) that $\sum_{y\in X} a_{x,y}=1_{\A_X}$, and so (3) holds.

To see that (4) holds, fix $k\ge 1$ and $\alpha,\beta\in \CC_k$. We have 
\[
u^{\Psi(\alpha\otimes\beta)}=u^{(1;\alpha\otimes\beta)}=\psi(u^{\alpha\otimes\beta})=\psi(u^\alpha\otimes u^\beta),
\]
and 
\[
u^{\Psi(\alpha)\otimes \Psi(\beta)}=u^{\Psi(\alpha)}\otimes u^{\Psi(\beta)}=u^{(1;\alpha)}\otimes u^{(1;\beta)}=\psi(u^\alpha)\otimes \psi(u^\beta).
\]
So we need to show that $(T_{P_{\alpha,\beta}}\otimes 1_{\A_X})\psi(u^\alpha\otimes u^\beta)=(\psi(u^\alpha)\otimes \psi(u^\beta))(T_{P_{\alpha,\beta}}\otimes 1_{\A_X})$. But this follows because $T_{P_{\alpha,\beta}}$ is the linear map $T$ we get from applying Proposition~\ref{prop:psirep} to $u=u^\alpha$ and $v=u^\beta$, and then this proposition says that $T_{P_{\alpha,\beta}}$ is an intertwiner between $\psi(u^\alpha\otimes u^\beta)$ and $\psi(u^\alpha)\otimes \psi(u^\beta)$.

Finally for (5), fix $k\ge 1$, $\alpha,\beta\in \CC_k$, and $\varphi\in\CC_k(\alpha,\beta)$. Assume that $T_\varphi$ is an intertwiner between $u^\alpha$ and $u^\beta$. Since $u^{\Psi(\alpha)}=\psi(u^\alpha)$ and $u^{\Psi(\beta)}=\psi(u^\beta)$, we need to show that $T_{\Psi(\varphi)}$ between $\psi(u^\alpha)$ and $\psi(u^\beta)$. But this follows from applying Proposition~\ref{prop:psiobj} to $u=u^\alpha$, $v=u^\beta$, and $T=T_\varphi$. 

To show that $(\A_X,\{u^\alpha\}_{\alpha\in\TT})$ is $\RR_X$-admissible, we need a model $(\A_X,\{v^\beta\}_{\beta\in\RR_X})$ with $v^\alpha=u^\alpha$ for $\alpha\in\TT$. Since $\RR_X$ is the smallest completion of $\TT$, we know that all objects in $\RR_X$ are finite direct sums of subobjects of objects in $\TT$. If $\beta$ is a subobject of $\alpha\in \TT$ there is a projection $p\in B(\HH_\alpha)$ with $\HH_\beta=p\HH_\alpha$, and then we take $v^\beta:=(p\otimes 1_{\A_X})u^\alpha(p\otimes 1_{\A_X})$. For a direct sum of these subobjects we take the corresponding direct sum of representations. This gives a family $\{v^\beta\}_{\beta\in\RR_X}$, and straightforward calculations show that $(\A_X,\{v^\beta\}_{\beta\in\RR_X})$ is a model of $\RR_X$ with $v^\alpha=u^\alpha$ for $\alpha\in\TT$. We conclude that $(\A_X,\{u^\alpha\}_{\alpha\in\TT})$ is an $\RR_X$-admissible pair. 


It remains to see that $(\A_X,\{u^\alpha\}_{\alpha\in\TT})$ is universal. To see this, fix an $\RR_X$-admissible pair $(C,\{w^\alpha\}_{\alpha\in\TT}\})$. Recall from Notation~\ref{note:Psiof1} that $\psi_n:=\Psi^{n}(1)$. Let $c^{(0)}:=1_C$, and for each $n\geq 1$, let $c^{(n)} = (c_{u,v})_{u,v\in X^n}$ be given by $c^{(n)}:=w^{\psi_{n-1}}$. Then the elements $\{c_{u,v} : u,v\in X^n, n\geq 1\}$ satisfy the defining relations (1)--(3) for $\A_X$, and hence there exists a homomorphism $\varphi : \A_X \to C$ satisfying $\varphi(a_{u,v}) = c_{u,v}$ for each $u,v$ of the same length. This is equivalent to
\begin{equation}\label{eq: utowforpsisubn}
 (\id\otimes\varphi)u^{\psi_n} = w^{\psi_n}.
\end{equation}
We need to show that $(\id\otimes \varphi)u^\alpha = w^\alpha$ for all $\alpha\in\TT$. Fix $\alpha\in\TT$ and apply Lemma~\ref{lem:decomposingobjects} to get $S_\alpha\in\mathcal{T}(\alpha,\otimes_{j=1}^m\psi_{i_j})$ with $S_\alpha^*\circ S_\alpha=\id_\alpha$. Let $T_{S_\alpha}$ be the linear map associated to $S_\alpha$ as in Section~\ref{sec:C*tensorcat}. Then we use identity (iv) of Definition~\ref{def: model} and \eqref{eq: utowforpsisubn} to get
\begin{align*}
	(T_{S_\alpha}\otimes 1_C)w^{\alpha} &= \left(\bigotimes_{j=1}^m w^{\psi_{i_j}}\right)(T_{S_\alpha}\otimes 1_C) \\
	&= \left(\bigotimes_{j=1}^m (\id\otimes\varphi)u^{\psi_{i_j}}\right)(T_{S_\alpha}\otimes 1_C) \\
	&= (\id\otimes\varphi)\left(\bigotimes_{j=1}^m u^{\psi_{i_j}}\right)(\id\otimes\varphi)(T_{S_\alpha}\otimes 1_{\A_X}) \\
	&= (\id\otimes\varphi)\left(\bigotimes_{j=1}^m u^{\psi_{i_j}}(T_{S_\alpha}\otimes 1_{\A_X}))\right) \\
	&= (\id\otimes\varphi)\left((T_{S_\alpha}\otimes 1_{\A_X}) u^{\alpha} \right) \\
	&= (T_{S_\alpha}	\otimes 1_C)\left( (\id\otimes\varphi)u^{\alpha}\right).
\end{align*}
Since $S_\alpha^*\circ S_\alpha=\id_\alpha$, we know from Lemma~\ref{lem: propsofTs} that $T_{S_\alpha}\otimes 1_C$ is an isometry. Hence we must have $(\id\otimes\varphi)u^{\alpha}=w^{\alpha}$. It follows that $(\A_X,\{u^\alpha\}_{\alpha\in\TT})$ is universal. 
\end{proof}


\begin{thebibliography}{99}
	

\bibitem{BS2009} T. Banica, R. Speicher, \emph{Liberation of orthogonal Lie groups}, Adv. Math. {\bf 222} (2009), 1461--1501.

\bibitem{BCP2010} T. Banica, S. Curran, R. Speicher, \emph{Classification results for easy quantum groups}, Pac. J. Math. 247 {\bf 1} (2010), 1--26.
%
\bibitem{Bichon03} J. Bichon, \emph{Quantum automorphism groups of finite graphs}, Proc. Amer. Math. Soc. {\bf 131} (2003), 665--673.
%
%

\bibitem{BR2025} N. Brownlowe, D. Robertson, \emph{Self-similar quantum groups}, J. Noncommut. Geom. {\bf 19} (2025), 269--296.
%
%

\bibitem{JSW2020} L. Junk, S. Schmidt, M. Weber, \emph{Almost all trees have quantum symmetry} Arch. Math. (Basel) {\bf 115} (2020), 367--378.

\bibitem{LT2016} F. Lemeux, P. Tarrago, \emph{Free wreath product quantum groups: the monoidal category, approximation properties and free probability}, J. Funct. Anal. {\bf 270} (2016), 3828--3883.

\bibitem{LMR2020} M. Lupini, L. Man\v{c}inska, D. Roberson, \emph{Nonlocal games and quantum permutation groups} J. Funct. Anal. {\bf 279} (2020), no. 5, 108592, 44 pp.
%
\bibitem{NeshTuset} S. Neshveyev, L. Tuset, \emph{Compact quantum groups and their representation categories}, Cours Sp\'{e}cialis\'{e}s [Specialized Courses], vol. 20, Soci\'{e}t\'{e} Math\'{e}matique de France, Paris, 2013.



\bibitem{RW2014} S. Raum, M. Weber, \emph{The combinatorics of an algebraic class of easy quantum groups}, Infin. Dimens. Anal. Quantum Probab. Relat. Top. {\bf 17} (2014).
	
\bibitem{RW2015} S. Raum, M. Weber, \emph{Easy quantum groups and quantum subgroups of a semi-direct product quantum group}, J. Noncommu. Geo. {\bf 9} (2015), 1261--1293.

\bibitem{RW2016} S. Raum, M. Weber, \emph{The full classification of orthogonal easy quantum groups}, Comm. Math. Phys. {\bf 341} (2016), 751--779.
%

%
%

\bibitem{Wang97} S. Wang, \emph{Krein duality for compact quantum groups}, J. Math. Phys. {\bf 38} (1997), 524--534.
%
\bibitem{Wang1998} S. Wang, \emph{Quantum symmetry groups of finite spaces}, Commun. Math. Phys. {\bf 195} (1998), 195--211.

\bibitem{Web2013} M. Weber, \emph{On the classification of easy quantum groups}, Adv. Math. {\bf 245} (2013), 500--533.
%
%
\bibitem{Wor87} S.L. Woronowicz, \emph{Compact matrix pseudogroups}, Comm. Math. Phys. {\bf 111} (1987), 613--665.

\bibitem{Wor88} S.L. Woronowicz, \emph{Tannaka--Krein duality for compact matrix pseudogroups. Twisted  $\operatorname{SU}(N)$ groups}, Invent. Math. {\bf 93} (1988), 35--76.
	
	\bibitem{Wor95} S.L. Woronowicz, \emph{Compact quantum groups}, Sym\'etries quantiques (Les Houches, 1995), North-Holland, Amsterdam (1998) 845--884.
%
	
	
\end{thebibliography}
\end{document}